\title{A Choice-free look at algebraic extensions of valuations}
\author{C\'edric A\"id}
\address{(C~.A~.) University of Antwerp, Department of Mathematics, Middelheim\-laan~1, 2020 Antwerpen, Belgium}
\email{Cedric.Aid@uantwerpen.be}
\date{2025-11-10}
\newtheorem{thm}{Theorem}[section]
\newtheorem{prop}[thm]{Proposition}
\newtheorem{cor}[thm]{Corollary}
\newtheorem{lem}[thm]{Lemma}
\theoremstyle{definition}
\newtheorem{de}[thm]{Definition}
\begin{document}
\begin{abstract}
In this paper, we study extensions of valuations over algebraic field extensions without the use of the Axiom of Choice.
We show a bijection between the extensions of a valuation and the maximal ideals of the relative integral closure of its valuation ring.
In the case of a finite extension, we show that these maximal ideals exist.
We conclude with an elementary proof of the fundamental inequality.
\smallskip

\noindent
{\sc{Classification (MSC 2020):}} 12J20, 13F30, 12F05
\medskip

\noindent
{\sc{Keywords:}} valuation, valuation extension, valuation ring, Zorn's lemma, Axiom of Choice, algebraic field extension
\end{abstract}
\maketitle
\section{Introduction}
Let $K$ be a field and $v$ a valuation on $K$.
We will denote by $\mathcal{O}_v$ the valuation ring of $v$ and $\mathfrak{m}_v$ its maximal ideal.
Its value group will be denoted $\Gamma_v$ and its residue field $\kappa_v$.
We also assume all rings to be commutative.

Chevalley's extension theorem (\cite[Theorem 3.1.1]{engler2005valued}) tells us that for any field extension $L$ of $K$, there exists an extension of $v$ to $L$.
The proof of this theorem uses Zorn's lemma, which is equivalent to the Axiom of Choice.
We will show that if $L$ is a finite extension of $K$, we can construct an extension of $v$ to $L$ without using the Axiom of Choice.
We do this in two steps. The first step is giving a bijection between the maximal ideals of the relative integral closure of $\mathcal{O}_v$ in $L$ and the extensions of $\mathcal{O}_v$ to $L$.
For this first step, we will not require $L$ to be a finite extension of $K$, only algebraic.
The second step is then to give a description of these maximal ideals when $L$ is a finite extension of $K$.
We will prepare for this by studying some ideals of the relative integral closure of $\mathcal{O}_v$ in $L$.

As immediate consequences of our description of the extensions of $v$ to a finite extension $L$ of $K$,
we get a version of the weak approximation theorem for distinct extensions of $v$ and a weaker version of the fundamental inequality.
Using this version of the weak approximation theorem as a starting point, we will give an elementary proof of the full fundamental inequality (\Cref{fun}).
While older proofs exist, they use more advanced methods.
The proof by I.~S.~Cohen and O.~Zariski (\cite{Zariski1957}) uses composite valuations, the proof by P.\ Roquette (\cite{Roquette1958}) uses topology and composite valuations
and the proof in \cite{engler2005valued} uses Galois theory.
\section{Ideals}
Let $v$ be a valuation on a field $K$ and let $L$ be a finite extension of $K$.
Let $R$ be the relative integral closure of $\mathcal{O}_v$ in $L$.
In this section we will prove some properties of its Jacobson radical and maximal ideals.

We begin with a few definitions.
\begin{de}
Let $R$ be a commutative ring.
We call $R$ reduced if it has no nonzero nilpotent elements.
\end{de}
\begin{de}
Let $R$ be a commutative ring. We define the Jacobson radical of $R$ as
$$J(R)=\left\{r\in R\mid\forall s\in R:\, 1+rs\in R^{\times}\right\}.$$
\end{de}
Should one accept the Axiom of Choice, it is possible to prove that $J(R)$ is the intersection of all maximal ideals of $R$ (see \cite[page 751]{dummit2004}).
Without the Axiom of Choice, we can still prove that it is contained in the intersection of all maximal ideals.
\begin{lem}{\label{in}}
Let $R$ be a ring and $I$ a maximal ideal of $R$.
Then $J(R)\subseteq I$.
\end{lem}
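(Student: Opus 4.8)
The plan is to reason by contradiction. Suppose $r\in J(R)$ but $r\notin I$. The key observation is that in a commutative ring with unit the ideal generated by $I$ and $r$ is exactly $I+Rr=\{i+ar\mid i\in I,\ a\in R\}$; since $r\notin I$ this ideal strictly contains $I$, so maximality of $I$ forces $I+Rr=R$. In particular $1\in I+Rr$, which gives us a concrete equation $1=i+ra$ for some $i\in I$ and $a\in R$. Everything hinges on this equation.

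From here I would simply rewrite it as $i=1+r(-a)$. On one hand, taking $s=-a$ in the definition of $J(R)$ and using $r\in J(R)$ shows $i=1+rs\in R^{\times}$. On the other hand, $i$ lies in $I$, and a proper ideal contains no unit; since a maximal ideal is by definition proper, $i\in I$ cannot be a unit. This contradiction shows $r\in I$, and hence $J(R)\subseteq I$.

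I do not expect any real obstacle: the argument is entirely elementary. The only points worth stating explicitly are that the ideal $I+Rr$ admits the above explicit description (so that we can actually extract the equation $1=i+ra$), and that maximal ideals are proper, so no unit can belong to one. Crucially, no appeal to Zorn's lemma or the Axiom of Choice is made anywhere; we only manipulate one explicitly given membership relation.
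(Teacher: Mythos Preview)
Your proof is correct and is essentially the same argument as the paper's: both use maximality of $I$ to write $1+rs\in I$ for some $s\in R$ (equivalently, $1=i+ra$), and then observe that an element of the proper ideal $I$ cannot be a unit, contradicting $r\in J(R)$. The only cosmetic difference is that you phrase it as a proof by contradiction while the paper proves the contrapositive directly.
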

\begin{proof}
Let $r\in R\setminus I$.
As $I$ is a maximal ideal of $R$, we can find $s\in R$ for which $1+rs\in I$.
It follows that $1+rs\notin R^{\times}$ and so $r\notin J(R)$.
\end{proof}
We prove some properties of the Jacobson radical.
\begin{prop}{\label{rad}}
Let $R$ be a commutative ring.
Then $J(R)$ is a radical ideal of $R$.
\end{prop}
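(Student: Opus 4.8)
The plan is to prove two assertions: that $J(R)$ is an ideal of $R$, and that it is closed under extraction of radicals. For the ideal structure, I would first observe that $0\in J(R)$ (since $1+0\cdot s=1\in R^{\times}$) and that $J(R)$ absorbs multiplication by $R$: if $r\in J(R)$ and $a\in R$, then for every $s\in R$ one has $1+(ar)s=1+r(as)\in R^{\times}$ by commutativity, so $ar\in J(R)$. The only point requiring an idea is closure under addition. Given $r_1,r_2\in J(R)$ and $s\in R$, put $u:=1+r_1s\in R^{\times}$ and rewrite
\[
1+(r_1+r_2)s \;=\; u+r_2s \;=\; u\bigl(1+(u^{-1}r_2)s\bigr);
\]
since $u^{-1}r_2\in J(R)$ by the absorption property just shown, the right-hand factor is a unit, hence so is the product. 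Therefore $r_1+r_2\in J(R)$, and $J(R)$ is an ideal.

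For the radical property, let $r\in R$ with $r^{n}\in J(R)$ for some $n\geq 1$; I must show $r\in J(R)$, that is, $1+rs\in R^{\times}$ for all $s\in R$. The crux is the telescoping identity
\[
(1+rs)\sum_{i=0}^{n-1}(-rs)^{i} \;=\; 1-(-rs)^{n} \;=\; 1+(-1)^{n+1}r^{n}s^{n}.
\]
Setting $t:=(-1)^{n+1}s^{n}\in R$, the right-hand side equals $1+r^{n}t$, which lies in $R^{\times}$ because $r^{n}\in J(R)$. Thus $1+rs$ divides a unit, and in a commutative ring an element dividing a unit is itself a unit; hence $1+rs\in R^{\times}$. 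As $s$ was arbitrary, $r\in J(R)$, so $J(R)$ is a radical ideal.

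I expect no genuine obstacle here: both parts are formal manipulations and invoke no choice principle, in keeping with the paper's theme. If anything, the mildly delicate steps are the ``unit shift'' $u\bigl(1+(u^{-1}r_2)s\bigr)$ in the additivity argument and keeping track of the sign and exponent in the telescoping identity, but neither amounts to more than bookkeeping.
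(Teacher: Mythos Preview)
Your proof is correct and follows essentially the same approach as the paper's: the same absorption/unit-shift trick for additivity, and the same telescoping factorization for the radical property. The only cosmetic difference is that the paper reduces the radical step to the case $n=2$ via $(1+rs)(1-rs)=1-r^{2}s^{2}$, while you handle arbitrary $n$ directly with the geometric-series identity; the underlying idea is identical.
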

\begin{proof}
First we will show that $J(R)$ is an ideal.
Let $r,r'\in J(R)$.
Let $s\in R$.
Then $1+rs$ is invertible in $R$, so we can find $t\in R$ such that $t(1+rs)=1$.
Now we have
$$t(1+(r+r')s)=1+r'st.$$
As both $s$ and $t$ are in $R$, so is $st$, which means $1+r'st$ is invertible in $R$.
This shows $1+(r+r')s$ is invertible. As this holds for arbitrary $s\in R$, it follows that $r+r'\in J(R)$.
Now let $r\in J(R)$ and $r'\in R$.
Let $s\in R$.
Then $r's\in R$, by which $1+rr's\in R^{\times}$.
Once again, this holds for arbitrary $s\in R$, so $rr'\in J(R)$.

Now we will show that $J(R)$ is radical.
It suffices to show that, for $r\in R$, if $r^2\in J(R)$, then $r\in J(R)$.
Let $r\in R$ such that $r^2\in J(R)$.
Take an arbitrary $s\in R$. Then
$$(1+rs)(1-rs)=1-r^2s^2=1+r^2\left(-s^2\right).$$
As $r^2\in J(R)$, this is invertible.
It follows that $(1+rs)$ is also invertible.
As this holds for arbitrary $s\in R$, we get that $r\in J(R)$.
\end{proof}

As $J(R)$ is a radical ideal of $R$, we know that $R/J(R)$ is reduced.
Our next step is to show that, if we take $R$ to be the relative integral closure of a valuation ring $\mathcal{O}_v$ in an extension of $K$,
then $R/J(R)$ is a $\kappa_v$-algebra.
\begin{lem}{\label{int}}
Let $L$ be an extension of $K$ and $v$ a valuation on $K$.
Let $R$ be the relative integral closure of $\mathcal{O}_v$ in $L$.
Then $$J(R)\cap \mathcal{O}_v=\mathfrak{m}_v.$$
\end{lem}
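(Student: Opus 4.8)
The plan is to prove the two inclusions of $J(R)\cap\mathcal{O}_v=\mathfrak{m}_v$ separately. For $J(R)\cap\mathcal{O}_v\subseteq\mathfrak{m}_v$, suppose $x\in J(R)\cap\mathcal{O}_v$ but $x\notin\mathfrak{m}_v$. Since $\mathcal{O}_v$ is a valuation ring, $x$ is then a unit of $\mathcal{O}_v$, so $-x^{-1}\in\mathcal{O}_v\subseteq R$; taking $s=-x^{-1}$ in the definition of $J(R)$ gives $1+xs=0$, which is not a unit of $R$ (as $R\neq 0$), contradicting $x\in J(R)$. Hence $x\in\mathfrak{m}_v$.

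For the reverse inclusion it suffices to show $\mathfrak{m}_v\subseteq J(R)$, i.e.\ that $1+xs\in R^{\times}$ for every $x\in\mathfrak{m}_v$ and every $s\in R$ (the case $x=0$ being trivial, so assume $x\neq 0$). Fix such $x$ and $s$ and choose a monic integral equation $s^{n}+a_{n-1}s^{n-1}+\dots+a_0=0$ with all $a_k\in\mathcal{O}_v$. First one checks $1+xs\neq 0$: otherwise $sx=-1$, and multiplying the integral equation by $x^{n}$ would express $(-1)^{n}$ as a sum of terms each divisible by $x$, hence as an element of $\mathfrak{m}_v$, which is impossible. So we may set $z=(1+xs)^{-1}\in L$ and note $xs=1/z-1$, that is $s=(1-z)/(xz)$. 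Substituting this into the integral equation and multiplying through by $(xz)^{n}$ yields $g(z)=0$, where
$$g(T)=\sum_{k=0}^{n}a_k\,x^{\,n-k}(1-T)^{k}T^{\,n-k}\in\mathcal{O}_v[T],\qquad a_n=1.$$

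The decisive point is that $g$ normalises to a monic polynomial over $\mathcal{O}_v$: the coefficient of $T^{n}$ in $g$ equals $\sum_{k=0}^{n}(-1)^{k}a_k x^{\,n-k}$, whose $k=n$ summand is the unit $(-1)^{n}$ while every other summand lies in $\mathfrak{m}_v$; since $\mathcal{O}_v$ is local, this coefficient is a unit of $\mathcal{O}_v$. Dividing $g$ by it produces a monic polynomial in $\mathcal{O}_v[T]$ having $z$ as a root, so $z$ is integral over $\mathcal{O}_v$ and therefore $z\in R$. As $1+xs\in R$ as well, $1+xs\in R^{\times}$; thus $x\in J(R)$, and combined with $x\in\mathcal{O}_v$ this finishes the proof.

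The only real obstacle is manufacturing the monic integral equation for $z=(1+xs)^{-1}$; the substitution $s=(1-z)/(xz)$ followed by clearing denominators is the natural device, and what makes it work is that this clearing forces the top coefficient to pick up the unit $(-1)^{n}$ while every competing term acquires a positive power of $x\in\mathfrak{m}_v$. Everything else is routine; note also that the argument uses only that $s$ is integral over $\mathcal{O}_v$ and never that $L/K$ is algebraic. (Alternatively, one can invoke Nakayama: $\mathcal{O}_v[s]$ is a finitely generated $\mathcal{O}_v$-module on which multiplication by $1+xs$ becomes the identity modulo $\mathfrak{m}_v$, hence is surjective by the choice-free determinant trick, hence bijective, so $(1+xs)^{-1}\in\mathcal{O}_v[s]\subseteq R$.)
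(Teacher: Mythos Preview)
Your proof is correct and follows essentially the same approach as the paper: both reduce to the observation that the relevant coefficient equals $(-1)^n$ plus terms lying in $\mathfrak{m}_v$, hence is a unit of $\mathcal{O}_v$. The only cosmetic difference is that the paper writes down a monic polynomial over $\mathcal{O}_v$ with root $1+xs$ whose \emph{constant} term is this unit (and reads off the inverse from the relation), whereas you write down the reciprocal polynomial, which has $z=(1+xs)^{-1}$ as a root and that same unit as its \emph{leading} coefficient, then normalise to conclude $z\in R$.
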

\begin{proof}
We have two inclusions to show.
First, we will show that \hbox{$J(R)\cap \mathcal{O}_v\subseteq \mathfrak{m}_v$.}
Take $r\in \mathcal{O}_v\setminus \mathfrak{m}_v$.
Then $r^{-1}\in \mathcal{O}_v\subseteq R$.
As $$1+r\left(-r^{-1}\right)=0,$$ which is not invertible, $r\notin J(R)$.

Now we will show that $\mathfrak{m}_v\subseteq J(R)\cap \mathcal{O}_v$.
Let $r\in \mathfrak{m}_v$.
This means that $v(r)>0$.
First we will show that $1+rs\in R^{\times}$ for all $s\in \mathcal{O}_v$.
For any $s\in \mathcal{O}_v$, we have that $v(s)\geq0$, and so $v(rs)>0$.
From this we obtain that $v(1+rs)=0$ and so $1+rs\in R^{\times}$.
Now we will show this for $s\in R$ not necessarily in $\mathcal{O}_v$.
Because $R$ is the relative integral closure of $\mathcal{O}_v$ in $L$,
we have that $s$ is integral over $\mathcal{O}_v$, whereby
we can find some $n\in\mathds{N}$ and $a_0,\ldots,a_{n-1}\in \mathcal{O}_v$ such that
$$s^n+\sum_{i=0}^{n-1}a_is^i=0.$$
Multiplying by $r^n$ gives us
$$(rs)^n+\sum_{i=0}^{n-1}r^{n-i}a_i(rs)^i=0.$$
Set $$f(X)=X^n+\sum_{i=0}^{n-1}r^{n-i}a_iX^i$$ and $g(X)=f(X-1)\in \mathcal{O}_v[X]$.
Then $1+rs$ is a root of $g$.
Write $g(X)=\sum_{i=0}^nc_iX^i$.
Note that $c_0\equiv f(X-1)\equiv f(-1)$ modulo $X$, and so
$$c_0=f(-1)=(-1)^n\left(1+r\sum_{i=0}^{n-1}a_ir^{(n-1)-i}(-1)^{n+i}\right).$$
Setting $t=\sum_{i=0}^{n-1}a_ir^{(n-1)-i}(-1)^{n+i}\in \mathcal{O}_v$, we have already shown that $1+rt$ is invertible.
Therefore, $c_0$ is also invertible.
This implies $$(1+rs)\left(-c_0^{-1}\sum_{i=1}^nc_i(1+rs)^i\right)=1$$
and so $1+rs$ is invertible.
This shows that $r\in J(R)$.
\end{proof}

We now give some properties of prime and maximal ideals of the relative integral closure of a valuation ring in an algebraic field extension.
These will help us describe the domain and image of the maps involved in the bijection.
\begin{prop}{\label{prime}}
Let $L$ be an algebraic extension of $K$ and $v$ a valuation on $K$.
Let $R$ be the relative integral closure of $\mathcal{O}_v$ in $L$.
Let $I$ be a prime ideal of $R$ satisfying $\mathcal{O}_v\cap I=\mathfrak{m}_v$.
Then $I$ is a maximal ideal of $R$.
\end{prop}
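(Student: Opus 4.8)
The plan is to reduce the statement to the assertion that $R/I$ is a field. First note that $I$ is a proper ideal: since $1\notin\mathfrak{m}_v=\mathcal{O}_v\cap I$, we have $1\notin I$. I would then consider the composite ring homomorphism $\mathcal{O}_v\hookrightarrow R\twoheadrightarrow R/I$, whose kernel is $\mathcal{O}_v\cap I=\mathfrak{m}_v$; it therefore induces an injective ring homomorphism $\kappa_v=\mathcal{O}_v/\mathfrak{m}_v\hookrightarrow R/I$, and I identify $\kappa_v$ with its image, a subfield of $R/I$. Since $R$ is integral over $\mathcal{O}_v$ (by definition of the relative integral closure), reducing integral-dependence relations modulo $I$ shows that $R/I$ is integral over $\kappa_v$; and $R/I$ is an integral domain because $I$ is prime. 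So everything comes down to the elementary fact that an integral domain which is integral over a subfield is itself a field.

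To prove that last fact (without any appeal to the Axiom of Choice), let $B$ be an integral domain which is integral over a subfield $k$, and let $b\in B$ be nonzero. The set of degrees of monic polynomials over $k$ vanishing at $b$ is a nonempty subset of $\mathds{N}$, so it has a least element; pick a corresponding relation $b^n+a_{n-1}b^{n-1}+\dots+a_1b+a_0=0$ with the $a_i\in k$ and $n$ minimal (necessarily $n\geq1$). If $a_0$ were $0$, then $b(b^{n-1}+a_{n-1}b^{n-2}+\dots+a_1)=0$, and cancelling $b\neq0$ in the domain $B$ would give a monic relation of degree $n-1$, contradicting minimality; hence $a_0\neq0$. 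Then
$$b\cdot\left(-a_0^{-1}\left(b^{n-1}+a_{n-1}b^{n-2}+\dots+a_1\right)\right)=1,$$
so $b$ is a unit. Applying this with $B=R/I$ and $k=\kappa_v$ shows $R/I$ is a field, hence $I$ is maximal.

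I do not expect a serious obstacle here; the argument is just the choice-free kernel of the classical fact that in an integral ring extension a prime ideal is maximal exactly when its contraction is (the lying-over circle of ideas). The only points I would be careful about are that $I$ is genuinely proper and that selecting a least-degree integral relation is legitimate constructively — which it is, being a minimization over a nonempty subset of $\mathds{N}$, not a choice from infinitely many sets. It is also worth noting that full algebraicity of $L/K$ is not used in this proposition: only integrality of $R$ over $\mathcal{O}_v$ is needed, and that holds by construction.
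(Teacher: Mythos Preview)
Your proof is correct and follows essentially the same approach as the paper: both reduce to showing that $R/I$ is a field by exploiting an integral relation for a nonzero element and the primality of $I$ to obtain a relation with invertible constant term. The only cosmetic difference is that the paper, instead of minimizing the degree, takes any monic relation, reduces modulo $I$, and factors out the highest power of $X$ to reach a polynomial $g$ with $g(\overline{0})\neq\overline{0}$ and $g(\overline{x})=\overline{0}$.
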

\begin{proof}
The condition $\mathcal{O}_v\cap I=\mathfrak{m}_v$ implies that $R/I$ is a $\kappa_v$-algebra.
We will show that it is a field.
Let $x\in R\setminus I$.
Because $R$ is integral over $\mathcal{O}_v$, we can find a monic polynomial $F\in \mathcal{O}_v[X]$ such that $F(x)=0$.
If we write $F(X)=\sum_{i=0}^na_iX^i$ with $n=\deg(F)$, then the polynomial $f(X)=\sum_{i=0}^n\overline{a_i}X^i$ is a nonzero polynomial over $\kappa_v$ having $\overline{x}$ as a root.
Let $j$ be the highest power of $X$ dividing $f$.
Suppose $j=n$. Then $a_i\in I$ for $i<n$.
This implies $x^n=-\sum_{i=0}^{n-1}a_ix^i\in I$. As $I$ is prime, we would have that $x\in I$, a contradiction.
Therefore $j<n$.
We can write $f(X)=X^jg(X)$ with $g(X)=\sum_{i=j}^n\overline{a_i}X^{i-j}$ non-constant.
As $I$ is prime and $\overline{x}\neq\overline{0}$, it follows that $g(\overline{x})=\overline{0}$.
By construction, $g\left(\overline{0}\right)\neq\overline{0}$, and so we can use $g$ to invert $\overline{x}$.

Now that we know $R/I$ is a field, it immediately follows that that $I$ is maximal.
\end{proof}
\begin{prop}{\label{ex}}
Let $L$ be an algebraic extension of $K$ and $v$ a valuation on $K$.
Let $R$ be the relative integral closure of $\mathcal{O}_v$ in $L$.
Let $I$ be a maximal ideal of $R$.
Then $I\cap \mathcal{O}_v=\mathfrak{m}_v$.
\end{prop}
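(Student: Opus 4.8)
The plan is to establish the two inclusions separately, each of which follows readily from the material already in hand. For $\mathfrak{m}_v\subseteq I\cap\mathcal{O}_v$, I would combine \Cref{int}, which gives $J(R)\cap\mathcal{O}_v=\mathfrak{m}_v$, with \Cref{in}, which gives $J(R)\subseteq I$ because $I$ is maximal. Intersecting the inclusion $J(R)\subseteq I$ with $\mathcal{O}_v$ then yields $\mathfrak{m}_v=J(R)\cap\mathcal{O}_v\subseteq I\cap\mathcal{O}_v$.

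For the reverse inclusion $I\cap\mathcal{O}_v\subseteq\mathfrak{m}_v$, the point is simply that $\mathcal{O}_v$ is a valuation ring, hence local with unique maximal ideal $\mathfrak{m}_v$, so every proper ideal of $\mathcal{O}_v$ is contained in $\mathfrak{m}_v$. It therefore suffices to note that $I\cap\mathcal{O}_v$ is a proper ideal of $\mathcal{O}_v$: it is an ideal, being the contraction of the ideal $I$ along the inclusion $\mathcal{O}_v\hookrightarrow R$, and it is proper because $1\notin I$ (as $I$ is a maximal, in particular proper, ideal of $R$), so $1\notin I\cap\mathcal{O}_v$. Putting the two inclusions together gives $I\cap\mathcal{O}_v=\mathfrak{m}_v$.

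I do not expect a genuine obstacle here; if anything, the care required runs the other way — not to invoke more than is needed. In particular, the hypothesis that $L$ is algebraic over $K$ and the result \Cref{prime} play no role in this argument, which relies only on $\mathcal{O}_v$ being local together with the two choice-free lemmas \Cref{in} and \Cref{int}; the algebraicity is presumably retained in the statement only for uniformity with \Cref{prime}, with which this proposition forms a converse, so that for a prime ideal of $R$ maximality becomes equivalent to contracting to $\mathfrak{m}_v$.
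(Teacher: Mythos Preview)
Your proposal is correct and follows essentially the same route as the paper: both inclusions are handled exactly as you describe, with \Cref{in} and \Cref{int} giving $\mathfrak{m}_v\subseteq I$, and the reverse inclusion coming from the fact that elements of $\mathcal{O}_v\setminus\mathfrak{m}_v$ are units in $R$ and hence avoid the proper ideal $I$ --- your phrasing via locality of $\mathcal{O}_v$ is just the same observation repackaged. Your remark that the algebraicity hypothesis plays no role here is also on point.
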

\begin{proof}
Let $I$ be a maximal ideal of $R$.
According to \Cref{in}, $J(R)\subseteq I$.
By \Cref{int}, we then also have $\mathfrak{m}_v\subseteq I$.
We will now prove that $I\cap \mathcal{O}_v\subseteq\mathfrak{m}_v$.
Let $x\in \mathcal{O}_v\setminus \mathfrak{m}_v$.
Then $x$ is invertible in $\mathcal{O}_v$ and therefore also in $R$.
As $I$ is a proper ideal of $R$ it follows that $x\notin I$.
\end{proof}

\section{The Bijection}
Let $v$ be a valuation on a field $K$.
Given an algebraic extension $L$ of $K$, we seek to establish a bijection between the maximal ideals of the relative integral closure of $\mathcal{O}_v$ in $L$ and the extensions of $v$ to $L$.
As we are able to recover a valuation from its valuation ring (see \cite[Proposition 2.1.2]{engler2005valued}), we need only give a bijection between these maximal ideals and valuation rings extending $\mathcal{O}_v$.

First, we give a few lemmas we will need later.
\begin{lem}{\label{cl}}
Let $K$ be a field and $v$ a valuation on $K$ with valuation ring $\mathcal{O}_v$.
Then $\mathcal{O}_v$ is relatively integrally closed in $K$.
\end{lem}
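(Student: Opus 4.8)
The plan is to argue directly, without any form of choice, that every $x\in K$ which is integral over $\mathcal{O}_v$ already lies in $\mathcal{O}_v$; this is exactly the statement that $\mathcal{O}_v$ equals its own relative integral closure in $K$. The whole argument will be a short computation with the valuation axioms (recall that $\mathcal{O}_v=\{x\in K\mid v(x)\geq 0\}$), so I expect no serious obstacle.

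First I would dispose of the case $x=0$, which trivially lies in $\mathcal{O}_v$. So assume $x\neq 0$ and fix a monic integral relation $x^n+a_{n-1}x^{n-1}+\cdots+a_0=0$ with $a_0,\dots,a_{n-1}\in\mathcal{O}_v$, i.e.\ $v(a_i)\geq 0$ for all $i$. Rewrite this as $x^n=-\sum_{i=0}^{n-1}a_ix^i$, and suppose toward a contradiction that $x\notin\mathcal{O}_v$, which for a valuation ring means $v(x)<0$.

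The key step is to apply $v$ to this identity and use the ultrametric inequality. Since $x\neq 0$ the left side is nonzero, so $n\,v(x)=v\!\left(\sum_{i=0}^{n-1}a_ix^i\right)\geq\min_{0\leq i\leq n-1}\bigl(v(a_i)+i\,v(x)\bigr)\geq\min_{0\leq i\leq n-1} i\,v(x)$. Because $v(x)<0$, the quantity $i\,v(x)$ decreases as $i$ grows, so this minimum over $i\in\{0,\dots,n-1\}$ is attained at $i=n-1$ and equals $(n-1)\,v(x)$. Hence $n\,v(x)\geq(n-1)\,v(x)$, which forces $v(x)\geq 0$, contradicting $v(x)<0$. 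Therefore $v(x)\geq 0$, i.e.\ $x\in\mathcal{O}_v$, which is what we wanted. An essentially equivalent presentation, perhaps cleaner to write out, is to divide the original relation by $x^{n-1}$: then $x=-a_{n-1}-a_{n-2}x^{-1}-\cdots-a_0x^{-(n-1)}$, and since $v(x^{-1})>0$ each term on the right has nonnegative value, so $x$ is a sum of elements of $\mathcal{O}_v$.

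As noted, there is no real obstruction here; this is the classical fact that valuation rings are integrally closed in their fraction field (and $K$ is the fraction field of $\mathcal{O}_v$), recast so as to avoid choice entirely. The only points deserving a moment's care are the separate treatment of $x=0$ — so that $v(x)$ is finite and the terms being compared in the ultrametric inequality are genuine — and the elementary observation that multiplying the nonpositive number $v(x)$ by a larger exponent makes it smaller, which is what pins the minimum at $i=n-1$.
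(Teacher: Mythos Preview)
Your proof is correct: the ultrametric estimate (or, equivalently, the division by $x^{n-1}$) is exactly the classical argument that valuation rings are integrally closed, and it uses no choice. The paper itself does not give a proof but simply cites \cite[Theorem~3.1.3~(1)]{engler2005valued}; the argument found there is essentially the computation you wrote out, so your proposal matches the intended proof and even improves on the paper's presentation by making it self-contained.
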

\begin{proof}
See \protect{\cite[Theorem 3.1.3 (1)]{engler2005valued}}.
\end{proof}
We will also need this computational result.
\begin{lem}{\label{ax}}
Let $R$ be a ring, $S$ an algebraic extension of $R$ and \hbox{$x\in S$.}
If $\sum_{i=0}^na_ix^i=0$ with $a_0,\ldots,a_n\in R$, then $a_nx$ is integral over $R$.
\end{lem}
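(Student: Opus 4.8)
The plan is to reduce the given relation to a monic one by the classical trick of multiplying through by a power of the leading coefficient. Starting from $\sum_{i=0}^{n}a_ix^i=0$, I would multiply both sides by $a_n^{n-1}$ to obtain $\sum_{i=0}^{n}a_ia_n^{n-1}x^i=0$.

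The crucial observation is the identity $a_ia_n^{n-1}x^i=(a_ia_n^{n-1-i})(a_nx)^i$, valid for every index $i$ with $0\le i\le n$. Since the exponent $n-1-i$ is nonnegative exactly when $i\le n-1$, the elements $b_i:=a_ia_n^{n-1-i}$ for $i=0,\ldots,n-1$ all lie in $R$, while the top term ($i=n$) is simply $(a_nx)^n$. Substituting these into the scaled relation turns it into
$$(a_nx)^n+\sum_{i=0}^{n-1}b_i(a_nx)^i=0,$$
so $a_nx$ is a root of the monic polynomial $X^n+\sum_{i=0}^{n-1}b_iX^i\in R[X]$ and is therefore integral over $R$.

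I do not expect any genuine obstacle: once the factorization above is noticed, the proof is a one-line computation valid in any commutative ring. The only things to keep an eye on are that the exponents $n-1-i$ stay nonnegative throughout the sum (which is why the sum stops at $i=n-1$ and the leading term is treated separately), and the trivial degenerate case $n=0$, where the hypothesis forces $a_0=0$, hence $a_nx=0$, which is integral. Note also that the hypothesis that $S$ is algebraic over $R$ plays no role; only the displayed polynomial relation is used.
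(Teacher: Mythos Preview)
Your proof is correct and is essentially identical to the paper's own argument: both multiply the relation by $a_n^{n-1}$ and rewrite the result as $(a_nx)^n+\sum_{i=0}^{n-1}a_n^{n-1-i}a_i(a_nx)^i=0$. Your additional remarks on the degenerate case $n=0$ and on the superfluous algebraicity hypothesis are accurate but not present in the paper.
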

\begin{proof}
Multiplying the equality by $a_n^{n-1}$ gives us
$$\sum_{i=0}^na_n^{n-1}a_ix^i=0,$$
which we can rewrite as
$$(a_nx)^n+\sum_{i=0}^{n-1}a_n^{n-1-i}a_i(a_nx)^i=0.$$
This shows $a_nx$ is integral over $R$.
\end{proof}

The following result is adapted from \cite[Theorem III.1.2]{Fuchs2001}.
This will give the map from the maximal ideals to the valuation rings.
We will make use of localization at a prime ideal.
For a background on this, we refer the reader to \cite[Section 15.4]{dummit2004}.
\begin{prop}{\label{loc}}
Let $L$ be an algebraic extension of $K$ and $v$ a valuation on $K$.
Let $R$ be the relative integral closure of $\mathcal{O}_v$ in $L$.
If $I$ is a prime ideal of $R$ for which $\mathcal{O}_v\cap I=\mathfrak{m}_v$,
then $R_I$, the localization of $R$ at $I$, is a valuation ring of $L$ extending $\mathcal{O}_v$.
\end{prop}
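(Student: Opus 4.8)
The plan is to verify directly that $R_I$ is a valuation ring of $L$ — that is, a subring $\mathcal{O}$ of $L$ with $x\in\mathcal{O}$ or $x^{-1}\in\mathcal{O}$ for every $x\in L^{\times}$ — and that $R_I\cap K=\mathcal{O}_v$. Since $R$ is a subring of the field $L$ it is a domain, so $R_I=\{r/s\mid r\in R,\ s\in R\setminus I\}$ is a subring of $\mathrm{Frac}(R)\subseteq L$; and by \Cref{ax}, applied to a relation with coefficients in $\mathcal{O}_v$ obtained by clearing denominators in an algebraic relation over $K$, every $x\in L$ can be written $x=(a_nx)/a_n$ with $a_nx\in R$ and $0\neq a_n\in\mathcal{O}_v$, so $\mathrm{Frac}(R)=L$. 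I will use repeatedly that $\xi\in L$ lies in $R_I$ exactly when the ideal $(R:\xi):=\{s\in R\mid s\xi\in R\}$ is not contained in $I$. The heart of the matter is establishing the dichotomy $x\in R_I$ or $x^{-1}\in R_I$.

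So fix $x\in L^{\times}$. Clearing denominators in an algebraic relation for $x$ over $K$ and dividing by a coefficient of least value (which exists because $\mathcal{O}_v$ is a valuation ring) we may assume
$$\sum_{i=0}^{n}a_ix^{i}=0,\qquad a_i\in\mathcal{O}_v,\quad a_n\neq0,\quad a_k\in\mathcal{O}_v^{\times}\ \text{for some }k.$$
I then look at the tails $T_j:=\sum_{i=j}^{n}a_ix^{i-j}$ for $0\le j\le n$; here $T_0=0$, $T_n=a_n$, and $T_{j-1}=a_{j-1}+xT_j$, while multiplying the displayed relation by $x^{-j}$ also gives $T_j=-\sum_{i=0}^{j-1}a_ix^{i-j}$. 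Thus each $T_j$ is at once a polynomial in $x$ and a polynomial in $x^{-1}$ with coefficients in $\mathcal{O}_v$. Now $\mathcal{O}_v[x]\cap\mathcal{O}_v[x^{-1}]$ is integral over $\mathcal{O}_v$: any element of it stabilizes a finitely generated faithful $\mathcal{O}_v$-module spanned by finitely many integer powers of $x$, and the determinant trick — which needs no choice — then exhibits it as integral. Hence every $T_j$ belongs to $R$.

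Next I would exploit the recursion. From $xT_j=T_{j-1}-a_{j-1}\in R$ we get $T_1,\dots,T_n\in(R:x)$, and from $x^{-1}(T_j-a_j)=T_{j+1}\in R$ we get $T_0-a_0,\dots,T_{n-1}-a_{n-1}\in(R:x^{-1})$. If some $T_j\notin I$ with $1\le j\le n$, then $(R:x)\not\subseteq I$ and $x\in R_I$; if some $T_j-a_j\notin I$ with $0\le j\le n-1$, then likewise $x^{-1}\in R_I$. Otherwise all of these elements lie in $I$, which forces $a_0=-(T_0-a_0)\in I$, $a_n=T_n\in I$, and $a_j=T_j-(T_j-a_j)\in I$ for $1\le j\le n-1$; so every $a_i\in I$, contradicting that the unit $a_k$ cannot lie in the proper ideal $I$. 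Hence $R_I$ is a valuation ring of $L$.

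Finally, $\mathcal{O}_v\subseteq R\subseteq R_I$; and if $c\in (R_I\cap K)\setminus\mathcal{O}_v$ then $c\neq0$ and $c^{-1}\in\mathfrak{m}_v=I\cap\mathcal{O}_v\subseteq I$, so writing $c=r/s$ with $r,s\in R$, $s\notin I$, yields $s=c^{-1}r\in I$, a contradiction; thus $R_I\cap K=\mathcal{O}_v$ and $R_I$ extends $\mathcal{O}_v$. I expect the main obstacle to be the middle paragraph: recognizing the tails $T_j$ as the right auxiliary elements and, above all, proving that all of them land in $R$ — this is the single place where we genuinely use that $R$ is the integral closure and not merely some $\mathcal{O}_v$-subalgebra of $L$, and keeping this step choice-free is exactly what the determinant trick provides.
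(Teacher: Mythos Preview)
Your argument is correct. Both proofs share the same skeleton: normalize an algebraic relation for $x$ so that some coefficient is a unit, form the tails $T_j=\sum_{i\ge j}a_ix^{i-j}$, and use them to witness $x\in R_I$ or $x^{-1}\in R_I$. The difference lies in how membership of the $T_j$ in $R$ is established. The paper runs a reverse induction, assuming $xT_j\in I$ at each stage and invoking \Cref{ax} to push $xT_{j-1}$ into $R$, interleaving this with a three-case analysis that terminates when a unit coefficient is reached. You instead observe once and for all that each $T_j$ lies in $\mathcal{O}_v[x]\cap\mathcal{O}_v[x^{-1}]$, which is integral over $\mathcal{O}_v$ by the determinant trick, so every $T_j\in R$ simultaneously; the recursion $xT_j=T_{j-1}-a_{j-1}$ then gives $T_j\in(R:x)$ and $T_j-a_j\in(R:x^{-1})$ for free, and a single non-inductive case split finishes. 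Your route trades the paper's explicit dependence on \Cref{ax} for the Cayley--Hamilton argument and yields a slightly cleaner endgame, at the cost of having to justify that $\mathcal{O}_v[x]\cap\mathcal{O}_v[x^{-1}]$ is integral; both are entirely choice-free. One minor remark: you only check $R_I\cap K=\mathcal{O}_v$, whereas the paper also verifies $IR_I\cap K=\mathfrak{m}_v$, but the latter is automatic once $R_I$ is known to be a valuation ring with $R_I\cap K=\mathcal{O}_v$, so nothing is missing.
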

\begin{proof}
Valuation rings of $L$ are characterized by the property that for every $x\in L^{\times}$, they contain at least one of $x$ or $x^{-1}$.
We will show that $R_I$ satisfies this property.

Let $x\in L^{\times}$.
We have that $\sum_{i=0}^na_ix^i=0$ for some $n\in\mathds{N}$ and $a_0,\ldots,a_n\in K$ with $a_0,a_n\neq0$.
Let $j$ be such that $$v(a_j)=\min\{v(a_i)\mid0\leq i\leq n\}.$$ Set $c_i=a_j^{-1}a_i$.
Then $\sum_{i=0}^nc_ix^i=0$ and $$\min\{v(c_i)\mid0\leq i\leq n\}=0,$$ by which $x$ is algebraic over $\mathcal{O}_v$.
Now set $$\alpha_j=\sum_{i=j}^{n}c_ix^{i-j}$$
for $0\leq j\leq n$. Then $$\alpha_{j-1}=x\alpha_j+c_{j-1}$$ and $$\alpha_{n}=c_{n}.$$
Set $\alpha_{n+1}=c_{n+1}=0$. Then $$\sum_{i=0}^{n+1}c_ix^i=0$$ and $$\alpha_{n}=x\alpha_{n+1}+c_{n}.$$
As $\alpha_{n+1}=0$, we have $x\alpha_{n+1}\in I$.
We will now proceed by reverse induction.

Let $1\leq j\leq n+1$ and suppose $x\alpha_j\in I$.
We can rewrite $$\sum_{i=0}^{n+1}c_ix^i=0$$
as
$$0=\sum_{i=0}^{j-1}c_ix^i+x^j\alpha_j=\sum_{i=0}^{j-2}c_ix^i+x^{j-1}(\alpha_jx+c_{j-1}).$$
Because $\alpha_jx\in I\subset R$ and $c_{j-1}\in R$, we get that $\alpha_{j-1}=\alpha_jx+c_{j-1}\in R$.
It follows from \Cref{ax} that $x\alpha_{j-1}$ is integral over $R$ and therefore also over $\mathcal{O}_v$.
As $R$ is the relative integral closure of $\mathcal{O}_v$ in $L$ and $x\alpha_{j-1}\in L$, we must also have that $x\alpha_{j-1}\in R$.
We now consider three cases:\par
\textit{Case $1$:} $v(c_{j-1})=0$\newline
Then $c_{j-1}\notin \mathfrak{m}_v=I\cap \mathcal{O}_v$.
Since $x\alpha_j\in I$ by hypothesis, $\alpha_{j-1}\notin I$ and so $\alpha_{j-1}$ is invertible in $R_I$.
We now have that $x=\frac{\alpha_{j-1}x}{\alpha_{j-1}}\in R_I$ and therefore we do not need to continue the induction.\par
\textit{Case $2$:} $x\alpha_{j-1}\notin I$\newline
Then $x\alpha_{j-1}$ is invertible in $R_I$.
We now have that $x^{-1}=\frac{\alpha_{j-1}}{\alpha_{j-1}x}\in R_I$ and therefore we do not need to continue the induction.\par
\textit{Case $3$:} $v(c_{j-1})>0$ and $x\alpha_{j-1}\in I$\newline
In this case, we see that the induction hypothesis applies to $j-1$ and we continue the induction.

As $\min\{v(c_i)\mid0\leq i\leq n\}=0$, case $1$ guarantees that the induction will stop.
When the induction stops we find that $x\in R_I$ or $x^{-1}\in R_I$.
This shows that $R_I$ is a valuation ring of $L$.

To show that $R_I$ is an extension of $\mathcal{O}_v$, we have to show that $R_I\cap K=\mathcal{O}_v$ and $IR_I\cap K=\mathfrak{m}_v$.
By construction, we have $\mathcal{O}_v\subseteq R\subseteq R_I$.
As we also have $\mathcal{O}_v\subseteq K$, this gives us the inclusion $\mathcal{O}_v\subseteq R_I\cap K$.
Next we show $\mathfrak{m}_v\subseteq IR_I\cap K$.
We have that
$$\mathfrak{m}_v=I\cap \mathcal{O}_v\subseteq I\subseteq IR_I,$$
which gives us the desired result.
Now we will show $R_I\cap K\subseteq \mathcal{O}_v$.
Let $x\in K\setminus \mathcal{O}_v$. Then $x^{-1}\in\mathfrak{m}_v\subseteq IR_I$.
The fact that $IR_I$ is a proper ideal of $R_I$ implies that $x\notin R_I$.
The last remaining inclusion to show is $IR_I\cap K\subseteq\mathfrak{m}_v$.
As we have already shown that $R_I\cap K=\mathcal{O}_v$, it suffices to show that $IR_I\cap \mathcal{O}_v\subseteq\mathfrak{m}_v$.
Let $x\in \mathcal{O}_v\setminus\mathfrak{m}_v$.
Then $x$ is invertible in $\mathcal{O}_v$.
This implies it is also invertible in $R_I$.
Therefore, $x\notin IR_I$.
\end{proof}
The next proposition will be used to define the map from the valuation rings to the maximal ideals.
\begin{prop}{\label{max}}
Let $L$ be an algebraic extension of $K$ and $v$ a valuation on $K$.
Let $R$ be the relative integral closure of $\mathcal{O}_v$ in $L$.
Let $w$ be an extension of $v$ to $L$.
Then $\mathfrak{m}_w\cap R$ is a maximal ideal of $R$.
\end{prop}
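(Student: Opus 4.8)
The plan is to recognize $\mathfrak{m}_w\cap R$ as the contraction to $R$ of the maximal ideal of the valuation ring $\mathcal{O}_w$, to check that it is a proper prime ideal of $R$ lying over $\mathfrak{m}_v$, and then to invoke \Cref{prime} to conclude that it is maximal.

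First I would verify the inclusion $R\subseteq\mathcal{O}_w$. Since $w$ is an extension of $v$, we have $\mathcal{O}_v\subseteq\mathcal{O}_w$, and by \Cref{cl} applied to the valuation $w$ on $L$, the ring $\mathcal{O}_w$ is relatively integrally closed in $L$. Every element of $R$ is integral over $\mathcal{O}_v$, hence over $\mathcal{O}_w$, and it lies in $L$; therefore it lies in $\mathcal{O}_w$. This inclusion is what makes $\mathfrak{m}_w\cap R$ an ideal of $R$: for $r\in R$ and $x\in\mathfrak{m}_w\cap R$, the product $rx$ lies in $R$, and since $r\in\mathcal{O}_w$ and $\mathfrak{m}_w$ is an ideal of $\mathcal{O}_w$, it also lies in $\mathfrak{m}_w$. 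It is a proper ideal because $1\notin\mathfrak{m}_w$, and it is prime because $\mathfrak{m}_w$ is a prime ideal of $\mathcal{O}_w$: if $ab\in\mathfrak{m}_w\cap R$ with $a,b\in R$, then one of $a,b$ lies in $\mathfrak{m}_w$, hence in $\mathfrak{m}_w\cap R$.

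Next I would compute $(\mathfrak{m}_w\cap R)\cap\mathcal{O}_v$. Since $\mathcal{O}_v\subseteq R$, this equals $\mathfrak{m}_w\cap\mathcal{O}_v$. For $x\in\mathcal{O}_v$ one has $x\in\mathfrak{m}_w$ if and only if $w(x)>0$, which (as $w$ restricts to $v$ on $K$) holds if and only if $v(x)>0$, i.e.\ $x\in\mathfrak{m}_v$; thus $\mathfrak{m}_w\cap\mathcal{O}_v=\mathfrak{m}_v$ — this is just the statement that $w$ extends $v$, read off on $\mathcal{O}_v$. Having shown that $\mathfrak{m}_w\cap R$ is a prime ideal of $R$ with $\mathcal{O}_v\cap(\mathfrak{m}_w\cap R)=\mathfrak{m}_v$, \Cref{prime} immediately yields that $\mathfrak{m}_w\cap R$ is a maximal ideal of $R$.

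The only step that needs genuine care is the inclusion $R\subseteq\mathcal{O}_w$, which is where \Cref{cl} enters; the rest is an unwinding of what it means for $w$ to be an extension of $v$ together with the primality of $\mathfrak{m}_w$. One could alternatively sidestep \Cref{prime} and argue directly that $R/(\mathfrak{m}_w\cap R)$ embeds into the residue field $\kappa_w$ while being a $\kappa_v$-algebra integral over the field $\kappa_v$, hence a field; but routing the argument through \Cref{prime} is cleaner and reuses work already done.
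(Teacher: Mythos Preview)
Your argument is correct and mirrors the paper's proof almost step for step: establish $R\subseteq\mathcal{O}_w$ via \Cref{cl}, deduce that $\mathfrak{m}_w\cap R$ is prime in $R$ with $(\mathfrak{m}_w\cap R)\cap\mathcal{O}_v=\mathfrak{m}_v$, and conclude by \Cref{prime}. The only difference is that you spell out in more detail why the contraction is a proper prime ideal, which the paper leaves implicit.
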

\begin{proof}
By \Cref{cl}, $\mathcal{O}_w$ is integrally closed in $L$.
As $\mathcal{O}_v\subseteq \mathcal{O}_w$, this gives us the chain of inclusions $\mathcal{O}_v\subseteq R\subseteq \mathcal{O}_w$.
Because $\mathfrak{m}_w$ is a maximal ideal of $\mathcal{O}_w$, it is also a prime ideal of $\mathcal{O}_w$.
It now follows from the inclusion $R\subseteq \mathcal{O}_w$ that $\mathfrak{m}_w\cap R$ is a prime ideal of $R$.
The inclusion $\mathcal{O}_v\subseteq R$ implies that $$(\mathfrak{m}_w\cap R)\cap \mathcal{O}_v=\mathfrak{m}_w\cap \mathcal{O}_v=\mathfrak{m}_v.$$
\Cref{prime} now tells us that $\mathfrak{m}_w\cap R$ is maximal.
\end{proof}
The next two results will be used to show the maps constructed above are each other's inverses.
\begin{lem}{\label{invloc}}
Let $R$ be a ring and $I$ a prime ideal of $R$.
We have that $IR_I\cap R=I$.
\end{lem}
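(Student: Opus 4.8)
The plan is to unwind the definitions. Write $S=R\setminus I$, so that $R_I=S^{-1}R$ and $IR_I=S^{-1}I=\{a/s\mid a\in I,\ s\in S\}$, and read $IR_I\cap R$ as the preimage of $IR_I$ under the canonical map $R\to R_I$, $r\mapsto r/1$. With this in place, only two short verifications remain, neither of which uses anything beyond the definition of a quotient in $S^{-1}R$ and the primality of $I$.

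For the inclusion $I\subseteq IR_I\cap R$: if $a\in I$, then $a/1\in S^{-1}I=IR_I$, so $a$ lies in the preimage. This direction is immediate.

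For the reverse inclusion $IR_I\cap R\subseteq I$: suppose $r\in R$ with $r/1\in IR_I$, say $r/1=a/s$ with $a\in I$ and $s\in S$. By the definition of equality in $S^{-1}R$, there is $t\in S$ with $t(sr-a)=0$ in $R$, hence $tsr=ta\in I$. Since $S$ is multiplicatively closed, $ts\in S$, that is, $ts\notin I$; as $(ts)r\in I$ and $I$ is prime, we conclude $r\in I$.

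The only step that requires a moment's thought is $ts\notin I$, which is exactly the primality of $I$ (equivalently, that $S$ is multiplicatively closed). Everything else is a finite manipulation inside $S^{-1}R$, so no appeal to Zorn's lemma or the Axiom of Choice enters the argument.
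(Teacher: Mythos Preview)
Your proof is correct and follows essentially the same approach as the paper's: both directions are handled identically in spirit, using primality of $I$ to pass from $(\text{unit in }R_I)\cdot r\in I$ to $r\in I$. Your version is in fact slightly more careful than the paper's, which writes $bx=a$ directly from $x=a/b$ and thereby tacitly assumes the canonical map $R\to R_I$ is injective (true in the paper's applications, where $R$ is a domain), whereas you correctly invoke the witness $t\in S$ from the definition of equality in $S^{-1}R$ and conclude via $tsr\in I$ with $ts\notin I$.
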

\begin{proof}
The inclusion $I\subseteq IR_I\cap R$ is trivial.
We will prove the other inclusion.
Let $x\in IR_I\cap R$.
Then $x=\frac{a}{b}$ for some $a\in I$ and $b\in R\setminus I$.
Rewriting this gives us $bx=a\in I$.
As $I$ is a prime ideal of $R$ and $b\in R\setminus I$, we must have $x\in I$.
\end{proof}
\begin{prop}{\label{invmax}}
Let $L$ be an algebraic extension of $K$ and $v$ a valuation on $K$.
Let $R$ be the relative integral closure of $\mathcal{O}_v$ in $L$.
Let $w$ be an extension of $v$ to $L$.
Let $I=\mathfrak{m}_w\cap R$.
Then $R_I=\mathcal{O}_w$.
\end{prop}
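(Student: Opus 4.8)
The plan is to prove the two inclusions $R_I\subseteq\mathcal{O}_w$ and $\mathcal{O}_w\subseteq R_I$ separately. First I would collect what comes for free. By \Cref{max}, the ideal $I=\mathfrak{m}_w\cap R$ is a maximal, hence prime, ideal of $R$, and (by \Cref{ex}, or directly from the proof of \Cref{max}) $I\cap\mathcal{O}_v=\mathfrak{m}_v$; therefore \Cref{loc} applies and $R_I$ is a valuation ring of $L$ extending $\mathcal{O}_v$. In particular $R_I$ is local with maximal ideal $IR_I$. I would also note, exactly as in the proof of \Cref{max}, that \Cref{cl} gives the chain $\mathcal{O}_v\subseteq R\subseteq\mathcal{O}_w$, since $\mathcal{O}_w$ is relatively integrally closed in $L$ and contains $\mathcal{O}_v$, while $R$ is the relative integral closure of $\mathcal{O}_v$ in $L$.

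For $R_I\subseteq\mathcal{O}_w$: an arbitrary element of $R_I$ has the form $a/b$ with $a\in R$ and $b\in R\setminus I$. Since $b\in R\subseteq\mathcal{O}_w$ and $b\notin I=\mathfrak{m}_w\cap R$, the element $b$ lies in $\mathcal{O}_w\setminus\mathfrak{m}_w$ and is thus a unit of $\mathcal{O}_w$; as $a\in R\subseteq\mathcal{O}_w$ as well, we get $a/b\in\mathcal{O}_w$.

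The less immediate inclusion is $\mathcal{O}_w\subseteq R_I$, and this is where I expect the only real content to lie. Let $x\in\mathcal{O}_w$ be nonzero. Because $R_I$ is a valuation ring of $L$, either $x\in R_I$ or $x^{-1}\in R_I$. Suppose, for contradiction, that $x\notin R_I$; then $x^{-1}$ cannot be a unit of $R_I$ (else $x\in R_I$), so $x^{-1}$ lies in the maximal ideal $IR_I$. Writing $x^{-1}=a/b$ with $a\in I$ and $b\in R\setminus I$ gives $b=ax$. Now $a\in I=\mathfrak{m}_w\cap R\subseteq\mathfrak{m}_w$ and $x\in\mathcal{O}_w$, so $b=ax\in\mathfrak{m}_w$; but $b\in R\setminus I$ with $R\subseteq\mathcal{O}_w$ forces $b\notin\mathfrak{m}_w$, a contradiction. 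Hence $x\in R_I$, and together with the previous step this yields $R_I=\mathcal{O}_w$. (One could also phrase the contradiction step through \Cref{invloc}.)

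The main obstacle, such as it is, is carrying out $\mathcal{O}_w\subseteq R_I$ choice-freely: the argument above leans on \Cref{loc} to know that $R_I$ is \emph{already} a valuation ring, which supplies the dichotomy $x\in R_I$ or $x^{-1}\in R_I$, and on the explicit description of the maximal ideal of the localization $R_I$ as the set of fractions with numerator in $I$ and denominator outside $I$. Everything else is bookkeeping with the chain $\mathcal{O}_v\subseteq R\subseteq\mathcal{O}_w$.
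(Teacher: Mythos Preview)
Your proof is correct. The first inclusion $R_I\subseteq\mathcal{O}_w$ is argued exactly as in the paper. For the reverse inclusion $\mathcal{O}_w\subseteq R_I$, however, you take a genuinely different route: the paper reruns the reverse-induction computation of \Cref{loc} (normalizing an algebraic relation for $x$, defining the partial sums $\alpha_j$, and stepping down through cases on $v(c_{j-1})$), whereas you invoke \Cref{loc} once to know that $R_I$ is already a valuation ring of $L$, and then use the dichotomy $x\in R_I$ or $x^{-1}\in R_I$ together with the concrete description of $IR_I$ to force a contradiction from $x^{-1}\in IR_I$ and $x\in\mathcal{O}_w$. Your argument is shorter and avoids duplicating the inductive machinery; the paper's version is more self-contained in that it does not appeal back to \Cref{loc} for this direction, but at the cost of essentially repeating that proof. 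Both arguments are choice-free, and there is no circularity in yours since \Cref{loc}, \Cref{max}, and \Cref{ex} are all established independently of the present proposition.
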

\begin{proof}
We begin with the inclusion $R_I\subseteq \mathcal{O}_w$.
Let $x\in R_I$. Then $x=\frac{a}{b}$ for some $a\in R$ and $b\in R\setminus I$.
As $\mathcal{O}_w$ is integrally closed in $L$ and contains $\mathcal{O}_v$, it must also contain $R$.
This implies $w(a)\geq0$ and $w(b)\geq0$.
We also have that $b\notin I$ and therefore $w(b)=0$.
It follows that $w(x)=w(a)-w(b)\geq0$ and so $x\in \mathcal{O}_w$.

We will now prove $\mathcal{O}_w\subseteq R_I$ using methods similar to those used to prove \Cref{loc}.
Let $x\in \mathcal{O}_w$.
We have that $\sum_{i=0}^na_ix^i=0$ for some $n\in\mathds{N}$ and $a_0,\ldots,a_n\in K$ with $a_0,a_n\neq0$.
Let $j$ be such that $$v(a_j)=\min\{v(a_i)\mid0\leq i\leq n\}.$$ Set $c_i=a_j^{-1}a_i$.
Then $\sum_{i=0}^nc_ix^i=0$ and $$\min\{v(c_i)\mid0\leq i\leq n\}=0,$$ by which $x$ is algebraic over $\mathcal{O}_v$.
Now set $$\alpha_j=\sum_{i=j}^{n}c_ix^{i-j}$$
for $0\leq j\leq n$. Then $$\alpha_{j-1}=x\alpha_j+c_{j-1}$$ and $$\alpha_{n}=c_{n}.$$
Set $\alpha_{n+1}=c_{n+1}=0$. Then $$\sum_{i=0}^{n+1}c_ix^i=0$$ and $$\alpha_{n}=x\alpha_{n+1}+c_{n}.$$
As $\alpha_{n+1}=0$, we have $x\alpha_{n+1}\in I$.
We will now proceed by reverse induction.

Let $1\leq j\leq n+1$ and suppose $x\alpha_j\in I$.
It follows that $w(x\alpha_j)>0$.
We can rewrite $$\sum_{i=0}^{n+1}c_ix^i=0$$
as
$$0=\sum_{i=0}^{j-1}c_ix^i+x^j\alpha_j=\sum_{i=0}^{j-2}c_ix^i+x^{j-1}(\alpha_jx+c_{j-1}).$$
Because $\alpha_jx\in I\subset R$ and $c_{j-1}\in R$, we get that $\alpha_{j-1}=\alpha_jx+c_{j-1}\in R$.
It follows from \Cref{ax} that $x\alpha_{j-1}$ is integral over $R$ and therefore also over $\mathcal{O}_v$.
As $R$ is the relative integral closure of $\mathcal{O}_v$ in $L$ and $x\alpha_{j-1}\in L$, we must also have that $x\alpha_{j-1}\in R$.
We consider two cases:\par
\textit{Case $1$:} $v(c_{j-1})=0$\newline
In this case $w(x\alpha_j)>w(c_{j-1})$ and so $w(\alpha_{j-1})=0$.
This implies $\alpha_{j-1}\in R\setminus I$.
We now have that $x=\frac{x\alpha_{j-1}}{\alpha_{j-1}}\in R_I$ and can stop the induction.\par
\textit{Case $2$:} $v(c_{j-1})>0$\newline
In this case $w(\alpha_{j-1})\geq\min\{v(c_{j-1}),w(x\alpha_j)\}>0$ and so $\alpha_{j-1}\in I$.
It then follows that $x\alpha_{j-1}\in I$, allowing us to continue the induction.\par

As $\min\{v(c_i)\mid0\leq i\leq n+1\}=0$, the induction must stop at some point, giving us $x\in R_I$.
\end{proof}

We are now ready to state the main result of this section.
\begin{thm}{\label{bijection}}
Let $L$ be an algebraic extension of $K$ and $v$ a valuation on $K$.
Let $R$ be the relative integral closure of $\mathcal{O}_v$ in $L$.
There is a one to one correspondence between maximal ideals of $R$ and valuation rings of $L$ extending $\mathcal{O}_v$.
The maximal ideal $I$ corresponds to the valuation ring $R_I$ and the valuation ring $\mathcal{O}_w$ corresponds to the maximal ideal $\mathfrak{m}_w\cap R$.
\end{thm}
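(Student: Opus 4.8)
The plan is to assemble the pieces already in place. By \Cref{ex}, every maximal ideal $I$ of $R$ satisfies $I\cap\mathcal{O}_v=\mathfrak{m}_v$, which is exactly the hypothesis of \Cref{loc}; that proposition then tells us $R_I$ is a valuation ring of $L$ extending $\mathcal{O}_v$. So $I\mapsto R_I$ defines a map $\Phi$ from the set of maximal ideals of $R$ to the set of valuation rings of $L$ extending $\mathcal{O}_v$. In the other direction, \Cref{max} shows that for an extension $w$ of $v$ to $L$ the set $\mathfrak{m}_w\cap R$ is a maximal ideal of $R$, so $\mathcal{O}_w\mapsto\mathfrak{m}_w\cap R$ defines a map $\Psi$ in the reverse direction. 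Since a valuation is recovered from its valuation ring, it suffices to check that $\Phi$ and $\Psi$ are mutually inverse.

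First I would verify $\Phi\circ\Psi=\mathrm{id}$. Starting from an extension $w$ of $v$ to $L$, set $I=\mathfrak{m}_w\cap R$, a maximal ideal of $R$ by \Cref{max}. Then $\Phi(I)=R_I=\mathcal{O}_w$ is precisely the statement of \Cref{invmax}, so this composition is the identity on the set of extensions of $v$ to $L$.

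Next I would verify $\Psi\circ\Phi=\mathrm{id}$. Starting from a maximal ideal $I$ of $R$, the localization $R_I$ is a local ring whose maximal ideal is $IR_I$: this is the standard description of a localization at a prime ideal, and needs only the direct observation that a fraction $\tfrac{a}{b}$ with $b\in R\setminus I$ is a unit in $R_I$ precisely when $a\notin I$, so the non-units are exactly the elements of $IR_I$. Hence if $w$ denotes the valuation with $\mathcal{O}_w=R_I$, then $\mathfrak{m}_w=IR_I$, and therefore $\Psi(\mathcal{O}_w)=IR_I\cap R$, which equals $I$ by \Cref{invloc}. This shows the composition is the identity on the set of maximal ideals of $R$, and the correspondence follows.

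I do not expect a genuine obstacle here: all the substance has been front-loaded into \Cref{loc}, \Cref{max}, \Cref{invmax} and \Cref{invloc}, and the verification above is just bookkeeping. The one point worth spelling out is the identification $\mathfrak{m}_w=IR_I$ when $\mathcal{O}_w=R_I$, i.e.\ that localizing $R$ at the prime ideal $I$ produces a local ring with maximal ideal $IR_I$; this fact is choice-free and elementary, but it is the hinge that lets \Cref{invloc} close the loop.
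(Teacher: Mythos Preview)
Your proposal is correct and follows essentially the same approach as the paper: you cite \Cref{ex} and \Cref{loc} to define $\Phi$, \Cref{max} to define $\Psi$, and then invoke \Cref{invmax} and \Cref{invloc} to check the two compositions. The only difference is that you spell out explicitly why $\mathfrak{m}_w=IR_I$ when $\mathcal{O}_w=R_I$, whereas the paper leaves this implicit in its one-line citation of \Cref{invloc} and \Cref{invmax}.
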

\begin{proof}
Let $\mathds{V}$ denote the set of valuation rings of $L$ extending $\mathcal{O}_v$
and let $M$ denote the set of maximal ideals of $R$.
By \Cref{max}, we can define $$\Psi:\mathds{V}\rightarrow M:\mathcal{O}_w\mapsto \mathfrak{m}_w\cap R.$$
By \Cref{ex}, we can use \Cref{loc} to define $$\Phi:M\rightarrow \mathds{V}:I\mapsto R_I.$$
It follows from \Cref{invloc} and \Cref{invmax} that $\Phi$ and $\Psi$ are inverses.
\end{proof}

\section{Finite extensions}
Let $v$ be a valuation on a field $K$ and let $L$ be a finite extension of $K$.
Let $R$ be the relative integral closure of $\mathcal{O}_v$ in $L$.
We know there is a bijection between the maximal ideals of $R$ and the extensions of $v$ to $L$.
In this section, we will describe the maximal ideals of $R$ by studying the $\kappa_v$-algebra $R/J(R)$.
We show that, even without Zorn's lemma, $R$ has at least one maximal ideal.
Using the bijection this shows that $v$ extends to $L$.

We begin by bounding the $\kappa_v$-dimension of $R/J(R)$.
\begin{prop}{\label{fin_dim}}
Let $L$ be a finite extension of $K$ and $v$ a valuation on $K$.
Let $R$ be the relative integral closure of $\mathcal{O}_v$ in $L$.
Then $$\dim_{\kappa_v}\left(R/J(R)\right)\leq\dim_K(L).$$
\end{prop}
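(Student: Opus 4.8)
The plan is to deduce the bound from finite-dimensional linear algebra: writing $n=\dim_K(L)$, it suffices to show that any $n+1$ elements of $R/J(R)$ are linearly dependent over $\kappa_v$, since a vector space in which every $(n+1)$-element family is linearly dependent has dimension at most $n$. For a space carrying such an a priori bound this last implication is proved by greedily enlarging a linearly independent set (a process that terminates after at most $n$ steps), so only finitely many selections are needed and no appeal to Zorn's lemma is involved.

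So I would fix $\bar x_0,\dots,\bar x_n\in R/J(R)$ and lift them to elements $x_0,\dots,x_n\in R\subseteq L$. Since $\dim_K(L)=n$, the family $x_0,\dots,x_n$ is $K$-linearly dependent, so there are $a_0,\dots,a_n\in K$, not all zero, with $\sum_{i=0}^n a_ix_i=0$. Then I would run the normalization already used in the proof of \Cref{loc}: choose $j$ with $v(a_j)=\min\{v(a_i)\mid 0\le i\le n\}$ (this minimum is attained at some $j$ with $a_j\neq 0$, as not all the $a_i$ vanish), and set $c_i=a_j^{-1}a_i$. This gives $\sum_{i=0}^n c_ix_i=0$ with every $c_i\in\mathcal{O}_v$ and $v(c_j)=0$, so $c_j\in\mathcal{O}_v^{\times}$.

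Finally I would reduce modulo $J(R)$. By \Cref{int} we have $J(R)\cap\mathcal{O}_v=\mathfrak{m}_v$, so the composite $\mathcal{O}_v\hookrightarrow R\twoheadrightarrow R/J(R)$ has kernel $\mathfrak{m}_v$ and factors through the embedding $\kappa_v\hookrightarrow R/J(R)$ that makes $R/J(R)$ a $\kappa_v$-algebra; let $\bar c_i\in\kappa_v$ denote the image of $c_i$. Applying the quotient map $R\twoheadrightarrow R/J(R)$ to $\sum c_ix_i=0$ yields $\sum_{i=0}^n\bar c_i\bar x_i=0$, and $\bar c_j\neq 0$ because $c_j$ is a unit of $\mathcal{O}_v$ and therefore cannot lie in $\mathfrak{m}_v$. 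Hence $\bar x_0,\dots,\bar x_n$ are linearly dependent over $\kappa_v$, which is exactly what was needed.

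I do not expect a genuine obstacle here: the argument is just the valuation-theoretic clearing of denominators from \Cref{loc} followed by reduction via \Cref{int}. The only points that need a little care are checking that the rescaled coefficients $c_i$ really lie in $\mathcal{O}_v$ with $c_j$ a unit — so that the reduced relation is nontrivial — and phrasing the step ``every $n+1$ vectors are dependent, hence $\dim\le n$'' so that it remains valid in $\mathsf{ZF}$, which it does since it only invokes finitely many choices.
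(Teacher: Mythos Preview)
Your proposal is correct and follows essentially the same argument as the paper: pick $n+1$ elements of $R$, use a $K$-linear relation, normalize the coefficients by dividing through by one of minimal $v$-value, and reduce modulo $J(R)$ using \Cref{int} to obtain a nontrivial $\kappa_v$-relation. Your write-up is in fact a bit more careful about the lifting step and about why the implication ``every $n+1$ vectors are dependent $\Rightarrow \dim\le n$'' needs no choice.
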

\begin{proof}
Let $n=[L:K]$.
Take any $r_0,\ldots,r_n\in R$.
Then there exist some $a_0,\ldots,a_n\in K$ not all $0$ such that $$\sum_{i=0}^na_ir_i=0.$$
Let $j\in \{0,\ldots,n\}$ such that $v(a_j)=\min\{v(a_i)|0\leq i\leq n\}$.
Then $a_j\neq0$. Set $c_i=a_j^{-1}a_i$. We have $$\sum_{i=0}^nc_ir_i=0$$ and $\min\{v(c_i)|0\leq i\leq n\}=0$.
This means that $c_ir_i\in R$ for all $i$.
Denoting by $\overline{r_i}$ the residue class of $r_i$ in $R/J(R)$, we get $$\sum_{i=0}^n\overline{c_i}\ \overline{r_i}=\overline{0}$$
and not all $\overline{c_i}$ are $\overline{0}$ because not all $c_i$ are in $\mathfrak{m}_v=J(R)\cap\mathfrak{m}_v$.
\end{proof}
We will need the following result to decompose $R/J(R)$.
Doing so will allow us to obtain maximal ideals of $R$.
\begin{prop}{\label{product}}
Let $F$ be a field and $A$ a finite-dimensional commutative \hbox{$F$-algebra.}
If $A$ is reduced, then there exist $r\geq1$ and $F_1,\ldots,F_r$ finite field extensions of $F$ such that
$$A\cong F_1\times\ldots\times F_r.$$
\end{prop}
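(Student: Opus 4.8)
The plan is to argue by induction on $n=\dim_F(A)$, which we may take to be at least $1$ (if $A$ is the zero ring there is nothing to say, and in our application $R/J(R)$ is nonzero). Since $A\neq 0$, the structure map $F\to A$ has trivial kernel, so we regard $F$ as a subfield of $A$. If $A$ happens to be a field, then $A$ is itself a finite field extension of $F$ and we are done with $r=1$ and $F_1=A$; in particular this disposes of the case $n=1$. So assume $A$ is not a field; the task is then to split off a factor, and for this I would produce a nontrivial idempotent of $A$.

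Since $A$ is not a field, there is a nonzero $x\in A$ that is not invertible. Consider the descending chain of $F$-subspaces $xA\supseteq x^2A\supseteq x^3A\supseteq\cdots$. As $A$ is finite-dimensional, the numbers $\dim_F(x^kA)$ form a non-increasing sequence of non-negative integers, hence are eventually constant; picking $m$ with $\dim_F(x^mA)=\dim_F(x^{m+1}A)$ forces $x^mA=x^{m+1}A$, so $x^m=x^{m+1}b$ for some $b\in A$. Set $e=(xb)^m$. From $x^m(xb)=x^m$ (and commutativity) one gets $x^m e=x^m$, whence a short computation gives $e^2=e$. Moreover $e\neq 0$, for otherwise $x^m=x^m e=0$, making $x$ nilpotent and hence $0$ because $A$ is reduced, contradicting $x\neq 0$; and $e\neq 1$, for otherwise $(xb)^m=1$ would make $xb$, and therefore $x$, a unit, contradicting the choice of $x$. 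Thus $e$ is a nontrivial idempotent.

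Now $a\mapsto(ea,(1-e)a)$ is a ring isomorphism $A\xrightarrow{\ \sim\ }eA\times(1-e)A$, where $eA$ and $(1-e)A$ are rings with identities $e$ and $1-e$ respectively; both are $F$-algebras via the induced projections, both are reduced (a nilpotent in a factor is nilpotent in $A$, hence zero), and since $A=eA\oplus(1-e)A$ as $F$-vector spaces with each summand of positive dimension, each factor has $F$-dimension strictly smaller than $n$. The induction hypothesis applied to $eA$ and to $(1-e)A$ writes each as a finite product of finite field extensions of $F$, and concatenating these products yields the desired decomposition of $A$. Nothing here uses choice: everything rests on finite-dimensional linear algebra — notably that a descending chain of subspaces of a finite-dimensional space stabilizes — and on the plain definition of \emph{field}, while the recursion terminates because the $F$-dimension strictly decreases at each step. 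The only point that needs genuine care is checking that $e$ is a \emph{bona fide} idempotent, neither $0$ nor $1$; this is where the hypotheses \emph{reduced} and \emph{not a field} enter, each exactly once, and it is a direct computation rather than a real obstacle.
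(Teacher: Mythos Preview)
Your argument is correct and follows the same overall architecture as the paper: induct on $\dim_F(A)$, handle the field case directly, and otherwise manufacture a nontrivial idempotent to split $A$ into two strictly smaller reduced $F$-algebras. The only difference is in how the idempotent is produced: the paper extracts it from a polynomial relation for a non-unit $a$ (finding $f$ with $f(0)=1$ and $a^jf(a)=0$, then using reducedness to get $af(a)=0$ and hence $f(a)^2=f(a)$), whereas you obtain it from stabilisation of the chain $xA\supseteq x^2A\supseteq\cdots$, a Fitting-type argument yielding $e=(xb)^m$. Both constructions use reducedness at exactly the same point (to kill the would-be nilpotent and ensure $e\neq 0$) and non-invertibility to ensure $e\neq 1$; neither is materially simpler than the other, and both are choice-free for the same reason.
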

\begin{proof}
We will prove this by induction on $\dim_F(A)$.
If $\dim_F(A)=1$, then $A=F$.

Now let $n\in\mathds{N}$, $n\geq1$ and suppose we have shown the lemma when $\dim_F(A)<n$.
We will show the case where $\dim_F(A)=n$.

If every nonzero element of $A$ is invertible, then $A$ is a field.
Since $F\subseteq A$ and $\dim_F(A)=n<\infty$, we have that $A$ is a finite field extension of $F$.

Suppose now that there exists some $a\in A\setminus\{0\}$ that is not invertible.
As $\dim_F(A)=n$, we can find $b_1,\ldots,b_{n+1}\in F$ not all $0$ such that $$\sum_{i=1}^{n+1}b_ia^i=0.$$
Let $j$ be minimal such that $b_j\neq0$. Set $c_i=b_j^{-1}b_i$.
Then $$a^j\sum_{i=j}^{n+1}c_ia^{i-j}=0.$$
Set $f(X)=\sum_{i=j}^{n+1}c_iX^{i-j}$.
Then $f(0)=c_j=1$ and $a^jf(a)=0$. Suppose $f(a)=0$. We would have $$a\left(-\sum_{i=j+1}^{n+1}c_ia^{i-(j+1)}\right)=1,$$ which is a contradiction as $a$ is not invertible.
Therefore $f(a)\neq0$. As $a^jf(a)=0$, we also have that $(af(a))^j=a^j(f(a))^j=0$.
Because $A$ is reduced, $af(a)=0$.
We can now compute
$$(f(a))^2=f(a)c_j+f(a)\left(a\sum_{i=j+1}^{n+1}c_ia^{i-(j+1)}\right)=f(a).$$
This means $f(a)$ is idempotent.
Note that $f(a)$ is not invertible as $af(a)=0$.
Therefore $Af(a)$ is a proper ideal of $A$.
Furthermore, as $f(a)$ is idempotent, $$\phi:A\to Af(a):x\mapsto xf(a)$$ is a ring homomorphism.
As all nonzero elements of $F$ are invertible, the image of $F$ under $\phi$ is isomorphic to $F$. Therefore $\phi$ is an $F$-algebra homomorphism.
Let $\pi:A\to A/Af(a)$ be the projection. This is also an $F$-algebra homomorphism.
Now define $$\psi:A\to Af(a)\times(A/Af(a)):x\mapsto(\phi(x),\pi(x)).$$
Both components of $\psi$ are $F$-algebra homomorphisms, and therefore $\psi$ is an $F$-algebra homomorphism.
We show that $\psi$ is injective.
Let $b\in \ker(\psi)$. Then $b\in Af(a)$. This means $b=cf(a)$ for some $c\in A$. This implies $$0=bf(a)=c(f(a))^2=cf(a)=b.$$
This means $\psi$ is injective, and as $$\dim_F(Af(a))+\dim_F(A/Af(a))=\dim_F(A)<\infty,$$ $\psi$ is bijective.
Then $\psi$ is an $F$-algebra isomorphism.
Recall that $Af(a)$ is a proper ideal of $A$. This means $$dim_F(Af(a))<\dim_F(A)=n$$ and $$\dim_F(A/Af(a))<\dim_F(A)=n.$$

The last step before we can apply the induction hypothesis is to show that $Af(a)$ and $A/Af(a)$ are reduced.
This follows immediately from the fact that $A$ is reduced and $$A\cong Af(a)\times A/Af(a).$$
The induction hypothesis then completes the proof.
\end{proof}
The next result will allow us to make use of the decomposition of $R/J(R)$ to describe the maximal ideals of $R$.
\begin{prop}{\label{semi}}
Let $R$ be a ring.
If $R/J(R)\cong F_1\times\ldots\times F_n$ for some fields $F_1,\ldots,F_n$, then $R$ is semilocal.
Its maximal ideals are given by $I_i=\ker(\pi_i)$ with $\pi_i$ the projection from $R$ to $F_i$ for $i=1,\ldots,n$.
\end{prop}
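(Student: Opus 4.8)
The plan is to exploit the surjection $q\colon R\to R/J(R)$ together with the given isomorphism, which I will treat as an identification $R/J(R)=F_1\times\cdots\times F_n$; writing $p_i$ for the projection onto $F_i$, we have $\pi_i=p_i\circ q$ and $I_i=\ker\pi_i$. I would first record the routine facts, none of which use choice: since $q$ is surjective, $q^{-1}$ is injective on ideals and $q\bigl(q^{-1}(A)\bigr)=A$ for every ideal $A$ of $R/J(R)$; for every ideal $\mathfrak a$ of $R$ one has $q^{-1}\bigl(q(\mathfrak a)\bigr)=\mathfrak a+J(R)$, which equals $\mathfrak a$ when $J(R)\subseteq\mathfrak a$; and $R/q^{-1}(A)\cong\bigl(R/J(R)\bigr)/A$.

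The easy direction is that each $I_i$ is maximal: $\pi_i$ is a composite of surjections, hence surjective, so $R/I_i\cong F_i$ is a field. The $I_i$ are pairwise distinct, because the $\ker p_i$ are pairwise distinct ideals of $F_1\times\cdots\times F_n$ (the idempotent $e_i=(0,\ldots,1,\ldots,0)$ lies in $\ker p_j$ but not in $\ker p_i$ when $j\neq i$) and $q^{-1}$ is injective on ideals. For the converse, let $\mathfrak m$ be any maximal ideal of $R$. By \Cref{in}, $J(R)\subseteq\mathfrak m$, so $q(\mathfrak m)$ is an ideal of $R/J(R)$ with $\bigl(R/J(R)\bigr)/q(\mathfrak m)\cong R/\mathfrak m$ a field; thus $q(\mathfrak m)$ is a maximal ideal of $F_1\times\cdots\times F_n$, and $\mathfrak m=q^{-1}\bigl(q(\mathfrak m)\bigr)$.

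Everything therefore reduces to identifying, choice-freely, the maximal ideals of a finite product of fields $A=F_1\times\cdots\times F_n$. I would use the orthogonal idempotents $e_1,\ldots,e_n$, which satisfy $\sum_i e_i=1$ and $e_ie_j=0$ for $i\neq j$. A maximal ideal $\mathfrak n$ of $A$ is prime, since $A/\mathfrak n$ is a field and hence a domain (no choice needed), so from $e_ie_j=0\in\mathfrak n$ for $i\neq j$ together with $\sum_i e_i=1\notin\mathfrak n$ one deduces that exactly one index $j$ satisfies $e_j\notin\mathfrak n$. Then $\mathfrak n$ contains $e_i$ for every $i\neq j$, hence contains the ideal $\ker p_j$ generated by those idempotents; since $\ker p_j$ is itself maximal and $\mathfrak n$ is proper, $\mathfrak n=\ker p_j$. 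Pulling this back through $q$ gives $\mathfrak m=\ker\pi_j=I_j$, so $\{I_1,\ldots,I_n\}$ is precisely the set of maximal ideals of $R$, and in particular $R$ is semilocal.

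The main obstacle, such as it is, is the last step --- the choice-free classification of the maximal ideals of $A=F_1\times\cdots\times F_n$ --- but it is mild: the family $e_1,\ldots,e_n$ is finite and completely explicit, and \emph{prime} and \emph{maximal} behave in the usual way for ideals whose quotient is a field, so no form of choice enters. The other point to state carefully is that $q^{-1}$ restricts to a bijection between the ideals of $R/J(R)$ and the ideals of $R$ containing $J(R)$, preserving maximality both ways; this is the standard correspondence theorem for quotient rings, and it too is choice-free.
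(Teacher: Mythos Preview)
Your proof is correct and follows essentially the same strategy as the paper: both invoke \Cref{in} to obtain $J(R)\subseteq\mathfrak m$ for any maximal ideal $\mathfrak m$, and then exploit the product decomposition of $R/J(R)$ to identify $\mathfrak m$ with one of the $I_i$. The only difference is organizational: the paper works directly in $R$, assuming $\mathfrak m\not\subseteq I_j$ for every $j$ and explicitly building an element $\sum_i x_iy_i\in\mathfrak m$ congruent to $1$ modulo $J(R)$ (a hands-on ``partition of unity''), whereas you first pass to the quotient via the correspondence theorem and then classify the maximal ideals of $F_1\times\cdots\times F_n$ using the orthogonal idempotents and primality---a slightly more modular packaging of the same content.
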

\begin{proof}
Set $I_i=\ker(\pi_i)$ for $i=1,\ldots,n$.
Then $R/I_i\cong F_i$, which is a field, and therefore $I_i$ is a maximal ideal of $R$.

Now let $I$ be an arbitrary maximal ideal of $R$.
We will show that $I\subseteq I_j$ for some $j$.
Suppose that this is not the case, then, for each $i$, we can find an $x_i\in I$ such that $x_i\notin I_i$.
Then $\pi_i(x_i)\neq0$.
As $R/J(R)\cong F_1\times\ldots\times F_n$, for any $i$, we can find $y_i\in R$ such that $\pi_i(y_i)=(\pi_i(x_i))^{-1}$ and $\pi_{i'}(y_i)=0$ for $i'\neq i$.
Then $$\sum_{i=1}^nx_iy_i\in I$$ because $I$ is an ideal of $R$. We also have that $$\sum_{i=0}^nx_iy_i\equiv1$$ modulo $J(R)$.
This means we can find an $r\in J(R)$ such that $$\sum_{i=0}^nx_iy_i=1+r.$$
Because $I$ is a maximal ideal of $R$, we have that $J(R)\subseteq I$ and therefore this implies $1\in I$.
This is a contradiction as maximal ideals are proper.
It now follows that $I\subseteq I_j$ for some $j$.
The maximality of $I$ now implies that $I=I_j$.
\end{proof}
Putting these results together, we obtain a choice-free description of all extensions of $v$ to $L$.
Sadly, this description does not give an algorithm to explicitly compute the extensions.
It does, however, guarantee the existence of at least one extension of $v$ to $L$.
\begin{thm}{\label{ext}}
Let $L$ be a finite extension of $K$ and $v$ a valuation on $K$.
Let $R$ be the relative integral closure of $\mathcal{O}_v$ in $L$.
Then $v$ has finitely many extensions $w_1,\ldots,w_n$ to $L$.
Furthermore, $$R/J(R)\cong \kappa_{w_1}\times\ldots\times \kappa_{w_n},$$
where the isomorphism is given by $$r+J(R)\mapsto (r+\mathfrak{m_{w_1}},\ldots,r+\mathfrak{m_{w_n}}).$$
\end{thm}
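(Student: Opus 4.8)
The plan is to combine the results of the previous three sections. First I would record that $A:=R/J(R)$ is a nonzero, reduced, finite-dimensional $\kappa_v$-algebra. It carries a $\kappa_v$-algebra structure because $J(R)\cap\mathcal{O}_v=\mathfrak{m}_v$ by \Cref{int}, so the composite $\mathcal{O}_v\hookrightarrow R\twoheadrightarrow A$ has kernel $\mathfrak{m}_v$ and embeds $\kappa_v$ into $A$; in particular $A\neq 0$. It is reduced because $J(R)$ is a radical ideal by \Cref{rad}, and $\dim_{\kappa_v}(A)\leq[L:K]<\infty$ by \Cref{fin_dim}. Hence \Cref{product} applies and yields an integer $n\geq 1$, finite field extensions $F_1,\ldots,F_n$ of $\kappa_v$, and an isomorphism $A\cong F_1\times\cdots\times F_n$. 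By \Cref{semi} the maximal ideals of $R$ are exactly $I_1,\ldots,I_n$, where $I_i=\ker(\pi_i)$ for the surjective composite $\pi_i\colon R\twoheadrightarrow A\xrightarrow{\sim}\prod_j F_j\twoheadrightarrow F_i$; thus each $\pi_i$ induces an isomorphism $R/I_i\cong F_i$, and $\bigcap_i I_i=J(R)$ since $A\to\prod_j F_j$ is injective.

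Next I would invoke \Cref{bijection}: the assignment $\mathcal{O}_w\mapsto\mathfrak{m}_w\cap R$ is a bijection from the valuation rings of $L$ extending $\mathcal{O}_v$ onto $\{I_1,\ldots,I_n\}$. Therefore $v$ has exactly $n$ extensions to $L$ — pairwise distinct, as the bijection is injective — and I name them $w_1,\ldots,w_n$ so that $\mathfrak{m}_{w_i}\cap R=I_i$. By \Cref{invmax}, $\mathcal{O}_{w_i}=R_{I_i}$, whence $\mathfrak{m}_{w_i}=I_iR_{I_i}$ and $\kappa_{w_i}=R_{I_i}/I_iR_{I_i}$. Note also that every extension of $v$ to $L$ is some $w_i$, since its valuation ring is sent by the bijection to one of the $I_i$.

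The decisive step is to identify $F_i$ with $\kappa_{w_i}$ compatibly with the reduction maps. Since $\mathcal{O}_v\subseteq R\subseteq\mathcal{O}_{w_i}$, reduction modulo $\mathfrak{m}_{w_i}$ restricts to a $\kappa_v$-algebra homomorphism $R\to\kappa_{w_i}$ with kernel $\mathfrak{m}_{w_i}\cap R=I_i$, hence an injection $R/I_i\hookrightarrow\kappa_{w_i}$ (which is also \Cref{invloc} in the form $I_iR_{I_i}\cap R=I_i$). For surjectivity: given $\tfrac{a}{b}\in R_{I_i}=\mathcal{O}_{w_i}$ with $a\in R$ and $b\in R\setminus I_i$, the class of $b$ in the field $R/I_i$ is nonzero, so there is $c\in R$ with $bc-1\in I_i\subseteq\mathfrak{m}_{w_i}$; since $b\notin\mathfrak{m}_{w_i}$ as well (as $I_i=\mathfrak{m}_{w_i}\cap R$), this gives $\tfrac{a}{b}\equiv ac\pmod{\mathfrak{m}_{w_i}}$ with $ac\in R$. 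So $R/I_i\cong\kappa_{w_i}$ via $r+I_i\mapsto r+\mathfrak{m}_{w_i}$. Composing the isomorphisms
$$R/J(R)\;\cong\;\prod_{i=1}^n F_i\;\cong\;\prod_{i=1}^n R/I_i\;\cong\;\prod_{i=1}^n\kappa_{w_i},$$
and tracing an element $r+J(R)$ — which maps to $(\pi_i(r))_i$, hence to $(r+I_i)_i$, hence to $(r+\mathfrak{m}_{w_i})_i$ — gives precisely the isomorphism in the statement.

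I expect the main obstacle to be this last identification: showing that the natural map $R/I_i\to\kappa_{w_i}$ is surjective — equivalently, that localizing at the maximal ideal $I_i$ does not enlarge the residue field — and, more fussily, checking that the composite of the displayed isomorphisms is literally the map $r+J(R)\mapsto(r+\mathfrak{m}_{w_i})_i$ written in the theorem (which is where the facts $\bigcap_i I_i=J(R)$ and $\mathfrak{m}_{w_i}\cap R=I_i$ enter, so that the two "reduce $r$" recipes coincide).
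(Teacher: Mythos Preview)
Your proposal is correct and follows essentially the same route as the paper: decompose $R/J(R)$ via \Cref{product}, read off the maximal ideals via \Cref{semi}, match them to extensions via \Cref{bijection}, and identify $R/I_i\cong\kappa_{w_i}$. The only difference is cosmetic: where the paper simply asserts $R_{I_i}/I_iR_{I_i}\cong R/I_i$ and invokes the first isomorphism theorem, you spell out the surjectivity of $R/I_i\to\kappa_{w_i}$ by hand, which is a welcome bit of extra care.
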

\begin{proof}
By \Cref{fin_dim}, we know that $R/J(R)$ is a finite dimensional $\kappa_v$-algebra.
Furthermore, as $J(R)$ is radical, $R/J(R)$ is reduced.
It follows from \Cref{product} that $$R/J(R)\cong F_1\times\ldots\times F_n$$
for some field extensions $F_1,\ldots,F_n$ of $\kappa_v$.
We now apply \Cref{semi} and get all maximal ideals $I_1,\ldots,I_n$ of $R$
as the kernels of the projections $\pi_1,\ldots,\pi_n$ from $R$ to $F_1,\ldots,F_n$.
By \Cref{bijection}, we now obtain all extensions of $\mathcal{O}_v$ to $L$ as the localizations $R_{I_i}$.
Note that $$R_{I_i}/I_iR_{I_i}\cong R/I_{i}.$$
Let $w_i$ be the extension of $v$ corresponding to $R_{I_i}$.
As $\mathfrak{m}_{w_i}\cap R=\ker(\pi_i)$, the first isomorphism theorem (see \cite[page 243]{dummit2004}) tells us that $\pi_i$ descends to an isomorphism between $\kappa_{w_i}$ and $F_i$.
Combining these, we get the requested isomorphism.
\end{proof}

We will now state two immediate consequences of this theorem.
\begin{cor}{\label{weak}}
Let $v$ be a valuation on a field $K$ and let $L$ be a finite extension of $K$.
Let $w_1,\ldots,w_n$ be distinct extensions of $v$ to $L$.
Let $a_i\in\kappa_{w_i}$ for $1\leq i\leq n$.
There exists an $x\in L$ integral over $\mathcal{O}_v$ such that $x+\mathfrak{m}_w=a_i$ for $1\leq i\leq n$.
\end{cor}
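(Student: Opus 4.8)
The plan is to read this off directly from \Cref{ext}. By that theorem $v$ has only finitely many extensions to $L$; I would list all of them as $w_1,\dots,w_m$, relabelled so that the $n$ given extensions come first (so $n\le m$). \Cref{ext} provides an isomorphism $R/J(R)\cong\kappa_{w_1}\times\dots\times\kappa_{w_m}$ sending $r+J(R)$ to the tuple of residues $(r+\mathfrak{m}_{w_1},\dots,r+\mathfrak{m}_{w_m})$; in particular this map is surjective.

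Before using it, I would record why the residues occurring in the statement make sense for an arbitrary element of $R$: for each $i$, \Cref{cl} gives that $\mathcal{O}_{w_i}$ is integrally closed in $L$, and since $\mathcal{O}_v\subseteq\mathcal{O}_{w_i}$ it must contain the relative integral closure $R$; hence $R\subseteq\mathcal{O}_{w_i}$, reduction modulo $\mathfrak{m}_{w_i}$ is defined on all of $R$, and its image lies in $\kappa_{w_i}$. With this in hand the argument is immediate: given $a_1,\dots,a_n$, I would take any preimage under the isomorphism above of the tuple whose first $n$ coordinates are $a_1,\dots,a_n$ and whose remaining coordinates are $0$. Surjectivity yields an $r\in R$ with $r+\mathfrak{m}_{w_i}=a_i$ for $1\le i\le n$. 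Setting $x=r$, we have $x\in R\subseteq L$, and since $R$ is by definition the relative integral closure of $\mathcal{O}_v$ in $L$, the element $x$ is integral over $\mathcal{O}_v$; the required congruences hold by construction.

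I do not expect a genuine obstacle, since all the work is already contained in \Cref{ext} (and, behind it, in \Cref{product} and \Cref{semi}). The only two points that deserve a word of care are the relabelling so that \emph{every} extension of $v$ is on the list---the hypothesis names only some of them, but the others can simply be assigned the residue $0$---and the observation that $R\subseteq\mathcal{O}_{w_i}$ for each $i$, which is precisely what places the residue $x+\mathfrak{m}_{w_i}$ and the prescribed value $a_i$ in the same field $\kappa_{w_i}$.
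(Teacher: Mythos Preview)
Your proposal is correct and follows essentially the same route as the paper: invoke the isomorphism of \Cref{ext} and lift the tuple of prescribed residues to an element of $R$. The paper's argument is terser (it simply says the residue map ``can be inverted'' and cites \Cref{bijection}), whereas you spell out two points the paper leaves implicit---that the given $w_1,\dots,w_n$ need not exhaust all extensions, so one pads with zeros, and that $R\subseteq\mathcal{O}_{w_i}$ via \Cref{cl}---but the underlying idea is identical.
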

\begin{proof}
Let $R$ be the relative integral closure of $\mathcal{O}_v$ in $L$.
By \Cref{bijection}, the map
$$r+J(R)\mapsto (r+\mathfrak{m_{w_1}},\ldots,r+\mathfrak{m_{w_n}})$$
can be inverted.
Taking any $x$ such that $x+J(R)$ is the inverse image of $(a_1,\ldots,a_n)$ gives us the element we seek.
\end{proof}
This first corollary is a version of the weak approximation theorem (see \cite[Theorem 3.2.7]{engler2005valued}) that applies only to distinct extensions of one valuation.
\begin{cor}
Let $v$ be a valuation on a field $K$ and let $L$ be a finite extension of $K$.
Let $w_1,\ldots,w_n$ be distinct extensions of $v$ to $L$.
Then
$$\sum_{i=1}^n[\kappa_{w_i}:\kappa_v]\leq [L:K].$$
\end{cor}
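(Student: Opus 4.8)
The plan is to read this off directly from \Cref{ext} and \Cref{fin_dim}, with $R$ the relative integral closure of $\mathcal{O}_v$ in $L$. By \Cref{ext} the set of \emph{all} extensions of $v$ to $L$ is finite; write it as $u_1,\ldots,u_m$. Since by \Cref{bijection} extensions correspond bijectively to maximal ideals of $R$, and the $u_j$ are pairwise distinct, the given distinct extensions $w_1,\ldots,w_n$ form a sub-list of $u_1,\ldots,u_m$; after relabelling we may assume $w_i=u_i$ for $1\le i\le n$, so in particular $n\le m$. The same theorem furnishes an isomorphism of $\kappa_v$-algebras $R/J(R)\cong\kappa_{u_1}\times\cdots\times\kappa_{u_m}$.

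Next I would take $\kappa_v$-dimensions of both sides. The left-hand side is finite and bounded by $[L:K]$ by \Cref{fin_dim}; the right-hand side equals $\sum_{i=1}^m[\kappa_{u_i}:\kappa_v]$, so in particular each of these degrees is finite. Dropping the (nonnegative) terms with index exceeding $n$ gives
$$\sum_{i=1}^n[\kappa_{w_i}:\kappa_v]\;\le\;\sum_{i=1}^m[\kappa_{u_i}:\kappa_v]\;=\;\dim_{\kappa_v}\!\left(R/J(R)\right)\;\le\;[L:K],$$
which is exactly the asserted inequality.

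There is essentially no real obstacle here; the proof is bookkeeping on top of \Cref{ext}. The only point that warrants a moment's care is that the corollary is phrased for an arbitrary family of distinct extensions rather than for the complete list, so one must first invoke the finiteness statement in \Cref{ext} to embed the given family into a finite exhaustive list before summing degrees; once that is done, the product decomposition and \Cref{fin_dim} close the argument immediately.
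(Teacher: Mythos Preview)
Your proof is correct and follows essentially the same approach as the paper: invoke \Cref{ext} to identify $R/J(R)$ with the product of residue fields, then apply \Cref{fin_dim}. You are in fact slightly more careful than the paper, which writes $\sum_{i=1}^n[\kappa_{w_i}:\kappa_v]=\dim_{\kappa_v}(R/J(R))$ as an equality without explicitly addressing the possibility that the given $w_1,\ldots,w_n$ might be a proper sub-list of all extensions; your embedding into the full list $u_1,\ldots,u_m$ and subsequent dropping of terms makes this step explicit.
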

\begin{proof}
Let $R$ be the relative integral closure of $\mathcal{O}_v$ in $L$.
By \Cref{ext} and \Cref{fin_dim},
$$\sum_{i=1}^n[\kappa_{w_i}:\kappa_v]=\dim_{\kappa_v}(R/J(R))\leq[L:K].$$
\end{proof}
This second corollary is a weaker version of the fundamental inequality (see \cite[Theorem 3.3.4]{engler2005valued}).
In the next section, we shall give a proof of the full inequality using elementary methods.

\section{The fundamental inequality}
In this section, we will give a proof of the fundamental inequality using elementary methods.
We start by defining the quantities involved in the inequality.
\begin{de}
Let $v$ be a valuation on a field $K$ and let $L$ be an extension of $K$.
Let $w$ be an extension of $v$ to $L$.
We define the ramification index
$$e(w|v)=[\Gamma_w:\Gamma_v]$$
and the residue degree
$$f(w|v)=[\kappa_w:\kappa_v].$$
\end{de}
These quantities are only meaningful when they are finite.
If we are working over a finite extension, we can give an upper bound.
This corresponds to the fundamental inequality in the case the valuation on the base field extends uniquely.
\begin{lem}{\label{ef}}
Let $v$ be a valuation on a field $K$ and let $L$ be a finite extension of $K$.
Let $w$ be an extension of $v$ to $L$.
Let $a_1,\ldots,a_n\in \mathcal{O}_w$ such that $w(a_1)=\ldots=w(a_n)=0$ and $\overline{a_1},\ldots,\overline{a_n}$ are linearly independent over $\kappa_v$.
Let $b_1,\ldots,b_m\in L$ such that $w(b_1),\ldots,w(b_m)$ are in distinct equivalence classes modulo $\Gamma_v$.
Then, for any $c_{1,1},\ldots,c_{n,m}\in K$, we have that
$$w\left(\sum_{i=1}^n\sum_{j=1}^mc_{i,j}a_ib_j\right)=\min\left\{w(c_{i,j}a_ib_j)\mid 1\leq i\leq n,\, 1\leq j\leq m\right\}.$$
In particular, $\{a_ib_j\mid 1\leq i\leq n,\, 1\leq j\leq m\}$ are linearly independent.
\end{lem}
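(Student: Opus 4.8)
The plan is to group the double sum by the index $j$, writing $\sum_{i,j}c_{i,j}a_ib_j=\sum_j d_jb_j$ with $d_j=\sum_i c_{i,j}a_i$, and to control the two kinds of cancellation with the two hypotheses separately: the residue-field independence of the $\overline{a_i}$ will pin down $w(d_j)$, and the value-group hypothesis on the $b_j$ will then force the $w(d_jb_j)$ to be pairwise distinct, at which point the ultrametric inequality gives the identity. If every $c_{i,j}$ is zero both sides equal $w(0)$, so I assume otherwise throughout. I would also recall at the outset that $w$ extending $v$ means $\mathcal{O}_v\subseteq\mathcal{O}_w$ and $\mathfrak{m}_v=\mathfrak{m}_w\cap\mathcal{O}_v$, so that there is a field embedding $\kappa_v\hookrightarrow\kappa_w$ making ``linear independence over $\kappa_v$'' meaningful for elements of $\kappa_w$.

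First I would fix $j$ and prove that $w(d_j)=\min_i w(c_{i,j}a_i)=\min_i w(c_{i,j})$, the last equality since $w(a_i)=0$; in particular $w(d_j)\in\Gamma_v$. To see this, assuming the $c_{i,j}$ are not all zero, choose $k$ with $c_{k,j}\neq0$ and $w(c_{k,j})$ minimal among the $w(c_{i,j})$. Then each $c_{k,j}^{-1}c_{i,j}$ lies in $\mathcal{O}_v$, and reducing $c_{k,j}^{-1}d_j=\sum_i(c_{k,j}^{-1}c_{i,j})a_i$ modulo $\mathfrak{m}_w$ yields the $\kappa_v$-linear combination $\sum_i\overline{c_{k,j}^{-1}c_{i,j}}\,\overline{a_i}$ of the $\overline{a_i}$, whose $k$-th coefficient is $\overline{1}\neq0$. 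By the assumed independence of $\overline{a_1},\ldots,\overline{a_n}$ over $\kappa_v$ this element is nonzero in $\kappa_w$, so $w(c_{k,j}^{-1}d_j)=0$ and $w(d_j)=w(c_{k,j})$.

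Next I would vary $j$. For each $j$ with $d_j\neq0$ we have $w(d_jb_j)=w(d_j)+w(b_j)$ with $w(d_j)\in\Gamma_v$, hence $w(d_jb_j)\equiv w(b_j)\pmod{\Gamma_v}$; since the $w(b_j)$ lie in distinct cosets of $\Gamma_v$, the values $w(d_jb_j)$ over those $j$ are pairwise distinct. Invoking the standard sharpening of the ultrametric inequality — a sum of elements with pairwise distinct values has value the minimum of them — and noting that a $j$ with $d_j=0$ has all $c_{i,j}=0$ and contributes only $w(0)$ to the minimum, I obtain
$$w\!\left(\sum_{i,j}c_{i,j}a_ib_j\right)=\min_{j:\,d_j\neq0}w(d_jb_j)=\min_{j:\,d_j\neq0}\min_i w(c_{i,j}a_ib_j)=\min_{i,j}w(c_{i,j}a_ib_j).$$
The final assertion is then immediate: a nontrivial relation $\sum_{i,j}c_{i,j}a_ib_j=0$ would force the left side above to equal $w(0)$ while the right side is finite, a contradiction, so the $a_ib_j$ are linearly independent over $K$.

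The one place I would be careful is the conclusion of the first step that $w(d_j)$ lies in $\Gamma_v$ and not merely in $\Gamma_w$: this is precisely the bridge between the two hypotheses, since it is what makes the coset condition on the $b_j$ applicable to the products $d_jb_j$. Everything else is bookkeeping with the ultrametric inequality, so I expect no further difficulty.
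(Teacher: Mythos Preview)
Your proof is correct and follows essentially the same two-step approach as the paper: first fix $j$ and use the $\kappa_v$-linear independence of the $\overline{a_i}$ to pin down $w\bigl(\sum_i c_{i,j}a_i\bigr)=\min_i v(c_{i,j})\in\Gamma_v$, then use the coset hypothesis on the $w(b_j)$ to force the values of the $j$-summands to be pairwise distinct so the ultrametric sharpening applies. Your explicit emphasis that $w(d_j)\in\Gamma_v$ is the bridge between the two hypotheses is exactly the point, and the paper's argument is the same in substance.
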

\begin{proof}
We adapt the proof from \cite[Lemma 3.2.2]{engler2005valued} for the sake of completeness.

We prove this in two steps.
First, fix an index $j$.
We will show
$$w\left(\sum_{i=1}^nc_{i,j}a_i\right)=\min\{v(c_{i,j})\mid1\leq i\leq n\}.$$
If $c_{1,j}=\ldots=c_{n,j}=0$, then both sides evaluate to $\infty$.
We now move on to the case they are not all $0$.
Then there exists some index $k$ such that
$$w\left(c_{k,j}\right)=\min\{v(c_{i,j})\mid1\leq i\leq n\}$$
and $c_{k,j}\neq0$.
For $1\leq i\leq n$, we set $d_i=c_{k,j}^{-1}c_{i,j}$.
Then $$\min\{w(d_i)\mid1\leq i\leq n\}=0.$$
As $d_i\in K$, it follows that $\overline{d_i}\in\kappa_v$, and so
$$\overline{\sum_{i=1}^nd_ia_i}=\sum_{i=1}^n\overline{d_i}\,\overline{a_i}\neq\overline{0}$$
because the $\overline{a_i}$ are $\kappa_v$-linearly independent.
Therefore $w\left(\sum_{i=1}^nd_ia_i\right)=0$.
Adding $v(c_{k,j})$ to both sides now gives the desired result, and we can continue to step two.

The second step is proving the equality.
If $c_{i,j}=0$ for all $1\leq i \leq n$ and $1\leq j\leq m$, then both sides evaluate to $\infty$.
We now consider the case when not all $c_{i,j}$ are $0$.
Then, for some $j$, we have that $w(\sum_{i=1}^nc_{i,j}a_ib_j)\in\Gamma_v+w(b_j)$.
If $j_1$ and $j_2$ are distinct indices for which this holds, then
$$w\left(\sum_{i=1}^nc_{i,{j_1}}a_ib_{j_1}\right)\in\Gamma_v+w(b_{j_1})$$
and
$$w\left(\sum_{i=1}^nc_{i,{j_2}}a_ib_{j_2}\right)\in\Gamma_v+w(b_{j_2}).$$
As $\Gamma_v+w(b_{j_1})\cap\Gamma_v+w(b_{j_2})=\varnothing$, their values must be distinct.
This implies
\begin{eqnarray*}
w\left(\sum_{i=1}^n\sum_{j=1}^mc_{i,j}a_ib_j\right)&=&\min\left\{w\left(\sum_{i=1}^nc_{i,j}a_ib_j\right)\mid 1\leq j\leq m\right\}\\
&=&\min\{w(c_{i,j}a_ib_j)\mid 1\leq i\leq n,\, 1\leq j\leq m\}.
\end{eqnarray*}

When not all $c_{i,j}$ are $0$, this minimum will not be $\infty$ and so $$\sum_{i=1}^n\sum_{j=1}^mc_{i,j}a_ib_j\neq0,$$
proving the $K$-linear independence of the $a_ib_j$.
\end{proof}
\begin{cor}
Let $v$ be a valuation on a field $K$ and let $L$ be a finite extension of $K$.
Let $w$ be an extension of $v$ to $L$. We have that
$$e(w|v)f(w|v)\leq [L:K].$$
\end{cor}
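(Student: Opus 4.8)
The plan is to read the inequality off directly from \Cref{ef}. The key point is that \Cref{ef} manufactures a large $K$-linearly independent family inside $L$ as soon as we have enough residue-independent units of value $0$ together with enough elements whose values are pairwise incongruent modulo $\Gamma_v$.

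First I would observe that $e(w|v)$ and $f(w|v)$ are finite, which is not assumed in advance. Suppose $\kappa_w$ contains $N$ elements $\overline{a_1},\dots,\overline{a_N}$ that are $\kappa_v$-linearly independent. Each $\overline{a_i}$ lifts to some $a_i\in\mathcal{O}_w$, and since $\overline{a_i}\neq\overline{0}$ we have $a_i\notin\mathfrak{m}_w$, hence $w(a_i)=0$. Applying \Cref{ef} with $m=1$ and $b_1=1$ shows that $a_1,\dots,a_N$ are $K$-linearly independent, so $N\leq[L:K]$. Symmetrically, if $w(b_1),\dots,w(b_N)$ lie in $N$ distinct cosets of $\Gamma_v$ in $\Gamma_w$ (such $b_i\in L^{\times}$ exist because $w$ surjects onto $\Gamma_w$), then \Cref{ef} with $n=1$ and $a_1=1$ gives $N\leq[L:K]$. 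Hence $f(w|v)=[\kappa_w:\kappa_v]\leq[L:K]$ and $e(w|v)=[\Gamma_w:\Gamma_v]\leq[L:K]$, both finite.

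Now set $n=f(w|v)$ and $m=e(w|v)$. Choose $a_1,\dots,a_n\in\mathcal{O}_w$ with $w(a_i)=0$ whose residues form a $\kappa_v$-basis of $\kappa_w$, and choose $b_1,\dots,b_m\in L^{\times}$ whose values represent the $m$ distinct cosets of $\Gamma_v$ in $\Gamma_w$. By \Cref{ef}, the $nm$ products $a_ib_j$ are $K$-linearly independent in $L$, so
\[
e(w|v)\,f(w|v)=nm\leq\dim_K(L)=[L:K].
\]

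I do not expect any real obstacle here: the content is entirely in \Cref{ef}, which is already established, and the only thing to verify is the existence of the two families, which is immediate from the definitions of $\kappa_w$ and $\Gamma_w$. If one prefers not to isolate the finiteness of $e$ and $f$ as a separate step, the argument can be phrased uniformly: for all finite $n\leq f(w|v)$ and $m\leq e(w|v)$ one has $nm\leq[L:K]$, and taking the supremum over such $n$ and $m$ gives the claim.
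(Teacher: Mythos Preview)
Your proof is correct and follows essentially the same approach as the paper: invoke \Cref{ef} to produce $K$-linearly independent elements and read off the bound. The paper's version is terser---it handles the finite and infinite cases in one sentence each---but your more explicit treatment of the finiteness of $e(w|v)$ and $f(w|v)$ via the special cases $m=1$ and $n=1$ is perfectly in line with it.
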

\begin{proof}
If both $e(w|f)$ and $f(w|v)$ are finite, then, by \Cref{ef}, we can find $e(w|v)f(w|v)$ $K$-linearly independent elements of $L$.
If at least one of $e(w|f)$ or $f(w|v)$ is infinite, then we instead find  arbitrarily many $K$-linearly independent elements of $L$.
It now follows that $e(w|v)f(w|v)\leq [L:K]$.
\end{proof}
It is this approach that we will use to prove the full fundamental inequality.
We will, however, need to be slightly more careful in our choice of elements, as well as argue that the elements we want to choose actually exist.
In order to do this, we need a few auxiliary results.
\begin{lem}
Let $v$ be a valuation on a field $K$ and let $L$ be an algebraic extension of $K$.
Let $w$ be an extension of $v$ to $L$.
Then $\Gamma_w/\Gamma_v$ is torsion.
\end{lem}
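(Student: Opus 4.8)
The plan is to prove that every element of $\Gamma_w$ has finite order modulo $\Gamma_v$, which is exactly the assertion that $\Gamma_w/\Gamma_v$ is torsion. So I would fix $\gamma\in\Gamma_w$ and choose $x\in L^{\times}$ with $w(x)=\gamma$. Since $L$ is algebraic over $K$, the element $x$ satisfies a polynomial relation $\sum_{i=0}^n a_ix^i=0$ with $a_0,\ldots,a_n\in K$ not all zero.

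The key step is the standard ultrametric observation that, among those indices $i$ with $a_i\neq0$, the minimum of the values $w(a_ix^i)=v(a_i)+i\gamma$ is attained at least twice. Indeed, if it were attained at a single index $k$, then $w\left(\sum_{i=0}^n a_ix^i\right)$ would equal $w(a_kx^k)=v(a_k)+k\gamma$, which is finite, contradicting that the sum is $0$. In particular the relation has nonzero coefficients at two distinct indices. I would then pick $i<j$ with $a_i,a_j\neq0$ and $v(a_i)+i\gamma=v(a_j)+j\gamma$.

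Rearranging yields $(j-i)\gamma=v(a_i)-v(a_j)\in\Gamma_v$, where $j-i$ is a positive integer. Hence $\gamma$ has finite order in $\Gamma_w/\Gamma_v$, and since $\gamma$ was arbitrary, $\Gamma_w/\Gamma_v$ is torsion. Note that no form of the Axiom of Choice enters.

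I do not expect any real obstacle in this argument. The only points that need a word of care are the ``minimum attained at least twice'' principle — which is purely a consequence of the valuation axioms — and the remark that a nonzero polynomial relation in the nonzero element $x$ automatically involves two distinct monomials, which is what guarantees $j-i\geq1$ and so makes the torsion conclusion non-vacuous.
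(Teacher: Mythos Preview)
Your proof is correct and follows essentially the same approach as the paper: both take a nonzero $x\in L$, use an algebraic relation $\sum a_ix^i=0$, observe via the ultrametric inequality that the minimum of the values $v(a_i)+iw(x)$ must be attained at two distinct indices, and conclude that a nonzero integer multiple of $w(x)$ lies in $\Gamma_v$. Your write-up is slightly more explicit about the ``minimum attained twice'' principle and about taking $x\in L^{\times}$, but the argument is the same.
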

\begin{proof}
Let $x\in L$.
As $L$ is an algebraic extension of $K$, there exist $n\in\mathds{N}$ and $a_0,\ldots,a_n\in K$ not all $0$ such that
$$\sum_{i=0}^na_ix^i=0.$$
It follows that $$w\left(\sum_{i=0}^na_ix^i\right)=\infty>\min\{v(a_i)+iw(x)\mid0\leq i\leq n\}.$$
This implies that $v(a_i)+iw(x)=v(a_j)+jw(x)$ for some $i\neq j$.
Therefore, $(i-j)w(x)=v(a_j)-v(a_i)\in\Gamma_v$.
\end{proof}
Given a valuation $v$ on a field $K$ and an algebraic extension $L$ of $K$,
this allows us to view the value group of any extension of $v$ to $L$ as a subgroup of the divisible closure of $\Gamma_v$.
In this way, we can compare the values of an element of $L$ under different extensions of $v$.
\begin{lem}{\label{approx}}
Let $v$ be a valuation on a field $K$ and let $L$ be a finite extension of $K$.
Let $w_1,\ldots,w_n$ be distinct extensions of $v$ to $L$.
For any $\gamma\in\Gamma_{w_1}$, there exists an $x\in L$ such that $w_1(x)=\gamma$ and $w_i(x)>\gamma$ for $i>1$.
\end{lem}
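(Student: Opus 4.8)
The plan is to reduce the statement to the residue-field approximation already obtained in \Cref{weak}. Throughout, I view $\Gamma_{w_1},\ldots,\Gamma_{w_n}$ as subgroups of the divisible closure $D$ of $\Gamma_v$, as explained after the previous lemma; in particular the inequalities ``$w_i(x)>\gamma$'' are meaningful in $D$. First I fix any $y\in L$ with $w_1(y)=\gamma$, which exists since $\gamma\in\Gamma_{w_1}=w_1(L^{\times})$.

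Next, let $R$ be the relative integral closure of $\mathcal{O}_v$ in $L$. I apply \Cref{weak} with $a_1=1\in\kappa_{w_1}$ and $a_i=0\in\kappa_{w_i}$ for $2\leq i\leq n$, obtaining an element $z\in R$ with $z+\mathfrak{m}_{w_1}=1$ and $z+\mathfrak{m}_{w_i}=0$ for $i>1$. By \Cref{cl} each $\mathcal{O}_{w_i}$ is integrally closed in $L$ and contains $\mathcal{O}_v$, hence contains $R$; therefore $w_i(z)\geq 0$ for every $i$. From $w_1(z-1)>0$ and $w_1(1)=0$ I deduce $w_1(z)=0$, while $z\in\mathfrak{m}_{w_i}$ gives $w_i(z)>0$ for $i>1$.

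Finally I set $x=yz^{m}$ and choose $m$ large. For any $m$ one has $w_1(x)=w_1(y)+m\,w_1(z)=\gamma$. For each $i>1$, since $w_i(z)>0$ the quantity $w_i(y)+m\,w_i(z)$ exceeds $\gamma$ once $m$ is large enough, say $m\geq m_i$; as there are only finitely many extensions $w_2,\ldots,w_n$, taking $m=\max\{m_2,\ldots,m_n\}$ yields $w_i(x)=w_i(y)+m\,w_i(z)>\gamma$ for all $i>1$. This $x$ has the required properties.

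I do not anticipate a real difficulty: the only subtleties are that values attached to different extensions must be compared inside $D$ (already set up in the excerpt) and that one must check $w_1(z)=0$ rather than merely $w_1(z)\geq 0$. The finiteness of the set of extensions, guaranteed by \Cref{ext}, is what allows the passage from the individual bounds $m_i$ to a single exponent $m$.
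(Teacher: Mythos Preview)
Your argument has a genuine gap at the step where you assert that, because $w_i(z)>0$, the quantity $w_i(y)+m\,w_i(z)$ exceeds $\gamma$ once $m$ is large enough. This is an Archimedean claim about the ordered group $D$, and it fails as soon as $\Gamma_v$ has rank at least two. For example, if $\Gamma_v=\mathds{Z}\times\mathds{Z}$ with the lexicographic order, one may well have $w_i(z)=(0,1)$ while $\gamma-w_i(y)=(1,0)$; then $m\,w_i(z)=(0,m)<(1,0)$ for every $m\in\mathds{N}$, and no exponent works. Nothing in \Cref{weak} prevents $w_i(z)$ from being infinitesimal relative to $\gamma-w_i(y)$: that corollary controls only the residues of $z$, not how deep $z$ sits inside $\mathfrak{m}_{w_i}$. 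Your proof is therefore valid only for rank-one valuations, whereas the lemma is stated (and used in \Cref{fe}) in full generality.

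The paper's proof avoids this by a multiplicative rather than additive device. Using the torsion of $\Gamma_{w_1}/\Gamma_v$, it fixes the order $e$ of $\gamma$ modulo $\Gamma_v$ and an element $c\in K$ with $v(c)=2e\gamma$. After an intermediate adjustment it produces $y\in L$ with $w_1(y)=\gamma$ and $w_i(y)<\gamma$ for $i>1$; then, since $1-2e<0$, multiplying an inequality by $1-2e$ reverses it, and one computes directly that $w_1\bigl(cy^{1-2e}\bigr)=2e\gamma+(1-2e)\gamma=\gamma$ while $w_i\bigl(cy^{1-2e}\bigr)=2e\gamma+(1-2e)w_i(y)>\gamma$ for $i>1$. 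This inequality reversal is purely order-theoretic and holds in any totally ordered abelian group, which is exactly what your ``take $m$ large'' step cannot deliver.
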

\begin{proof}
As $\gamma\in\Gamma_{w_1}$, there exists some $x\in L$ such that $w_1(x)=\gamma$.

By \Cref{weak}, we can find $a\in L$ for which
\begin{itemize}
\item $w_1(a)=0$
\item $w_i(a)=0$ for $i>1$ for which $w_i(x)<\gamma$
\item $w_i(a)>0$ for $i>1$ for which $w_i(x)\geq\gamma$
\end{itemize}
We can also find $b\in L$ such that $w_1(b)>0$ and $w_i(b)=0$ for $i>1$.
Let $e$ be the order of $\gamma$ modulo $\Gamma_v$.
Then there exists a $c\in K$ such that $v(c)=2e\gamma$.
We have that $e\geq1$ and so $1-2e<0$.
This implies that multiplying an inequality with $1-2e$ reverses it.
As such, we see that the values of $bc(ax)^{1-2e}$ satisfy the following:
\begin{itemize}
\item $w_1\left(bc(ax)^{1-2e}\right)>\gamma$
\item $w_i\left(bc(ax)^{1-2e}\right)>\gamma$ for $i>1$ for which $w_i(x)<\gamma$
\item $w_i\left(bc(ax)^{1-2e}\right)<\gamma$ for $i>1$ for which $w_i(x)\geq\gamma$
\end{itemize}
Set $y=x+bc(ax)^{1-2e}$.
We have that $w_1(y)=\gamma$ and $w_i(y)<\gamma$ for $i>1$.
Now consider $cy^{1-2e}$.
We compute $$w_1\left(cy^{1-2e}\right)=2e\gamma+(1-2e)\gamma=\gamma$$
and for $i>1$
$$w_i\left(cy^{1-2e}\right)=2e\gamma+(1-2e)w_i(y)>\gamma.$$
The element $cy^{1-2e}$ has the required properties.
\end{proof}
We can now begin the proof of the fundamental inequality.
\begin{thm}{\label{fe}}
Let $v$ be a valuation on a field $K$ and let $L$ be a finite extension of $K$.
Let $w_1,\ldots,w_n$ be distinct extensions of $v$ to $L$.
Set $e_i=e(w_i|v)$ and $f_i=f(w_i|v)$.
For $1\leq i\leq n$, let $a_{i,1},\ldots,a_{i,f_i}\in L$ satisfying $w_i(a_{i,j})=0$ and $w_{i'}(a_{i,j})>0$ for $i'\neq i$ such that $a_{i,1}+\mathfrak{m}_{w_i},\ldots,a_{i,f_i}+\mathfrak{m}_{w_i}$ are linearly independent over $\kappa_v$.
For $1\leq i\leq n$, let $b_{i,1}\ldots,b_{i,e_i}\in L$ such that the $w_i(b_{i,j})$ represent distinct equivalence classes of $\Gamma_{w_i}/\Gamma_v$ and $w_{i'}(b_{i,j})\geq w_i(b_{i,j})$ for $i'\neq i$.

For any $$\{c_{i,j,k}\in K\mid 1\leq i\leq n,\, 1\leq j\leq f_i,\, 1\leq k\leq e_i\}$$
we have that
\begin{eqnarray*}
&\min\left\{w_l\left(\sum_{i=1}^n\sum_{j=1}^{f_i}\sum_{k=1}^{e_i}c_{i,j,k}a_{i,j}b_{i,k}\right)\mid1\leq l\leq n\right\}\\
&=\min\left\{v(c_{i,j,k})+w_i(b_{i,k})\mid1\leq i\leq n,\, 1\leq j\leq f_i,\, 1\leq k\leq e_i\right\}.
\end{eqnarray*}
In particular, the elements
$$\{a_{i,j}b_{i,k}\mid1\leq i\leq n,\, 1\leq j\leq f_i,\, 1\leq k\leq e_i\}$$
are linearly independent over $K$.
\end{thm}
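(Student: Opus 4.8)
The plan is to adapt the proof of \Cref{ef}, but to organise the triple sum according to which extension ``owns'' a given block of terms. Set
$$\mu=\min\left\{v(c_{i,j,k})+w_i(b_{i,k})\mid 1\leq i\leq n,\,1\leq j\leq f_i,\,1\leq k\leq e_i\right\}.$$
If all the $c_{i,j,k}$ vanish, then both sides of the asserted identity are $\infty$, so I would assume this is not the case; then $\mu<\infty$. For each index $l$ I would split the full sum as $S_l+T_l$, where $S_l=\sum_{j,k}c_{l,j,k}a_{l,j}b_{l,k}$ collects the terms with $i=l$ and $T_l=\sum_{i\neq l}\sum_{j,k}c_{i,j,k}a_{i,j}b_{i,k}$ collects the rest.

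The ``diagonal'' block $S_l$ is handled directly by \Cref{ef}: since $w_l(a_{l,j})=0$, since the residues $a_{l,j}+\mathfrak{m}_{w_l}$ are $\kappa_v$-linearly independent, and since the values $w_l(b_{l,k})$ lie in distinct classes of $\Gamma_{w_l}/\Gamma_v$, that lemma applied to the extension $w_l$ of $v$ gives
$$w_l(S_l)=\min\left\{v(c_{l,j,k})+w_l(b_{l,k})\mid 1\leq j\leq f_l,\,1\leq k\leq e_l\right\}\geq\mu.$$
For the ``off-diagonal'' block $T_l$, I would bound each individual term: if $i\neq l$, then $w_l(c_{i,j,k}a_{i,j}b_{i,k})=v(c_{i,j,k})+w_l(a_{i,j})+w_l(b_{i,k})$, and since $w_l(a_{i,j})>0$ and $w_l(b_{i,k})\geq w_i(b_{i,k})$ by hypothesis, this exceeds $v(c_{i,j,k})+w_i(b_{i,k})\geq\mu$; hence the ultrametric inequality gives $w_l(T_l)>\mu$. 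Combining the two estimates, $w_l(S_l+T_l)\geq\mu$ for every $l$, which is the inequality ``$\geq$'' in the statement.

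For the reverse inequality I would fix a triple $(i_0,j_0,k_0)$ attaining $\mu$. Then in the formula above for $w_{i_0}(S_{i_0})$ the term indexed by $(j_0,k_0)$ equals $\mu$ and no term is smaller, so $w_{i_0}(S_{i_0})=\mu$; since $w_{i_0}(T_{i_0})>\mu$, the ultrametric inequality (in the sharp form: a sum of two elements of distinct value has the smaller value) yields $w_{i_0}(S_{i_0}+T_{i_0})=\mu$. Therefore the minimum over $l$ of the $w_l$-values of the full sum is at most $\mu$, which together with the previous paragraph proves the equality. The linear independence over $K$ is then immediate: a nontrivial relation $\sum_{i,j,k}c_{i,j,k}a_{i,j}b_{i,k}=0$ would force every $w_l$ of this sum to be $\infty$, contradicting that this minimum equals the finite number $\mu$.

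The only delicate point --- more a matter of bookkeeping than a genuine obstacle --- is arranging the splitting so that $T_l$ is genuinely negligible. This works precisely because the hypothesis on the $a_{i,j}$ is a \emph{strict} inequality for $i'\neq i$: added to the merely weak inequality on the $b_{i,k}$, it lifts every off-diagonal term strictly above $\mu$, and one then observes that the outer minimum on the left is attained exactly at the index $i_0$ realising $\mu$. Everything else is \Cref{ef} applied $n$ times together with the ultrametric inequality.
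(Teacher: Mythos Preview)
Your proof is correct and follows essentially the same approach as the paper: both split the triple sum into the ``diagonal'' block $S_l$ (handled by \Cref{ef}) and the ``off-diagonal'' block $T_l$ (made strictly larger via the strict hypothesis on the $a_{i,j}$), then identify an index where equality is attained. The only cosmetic difference is that the paper records the per-block minima $\delta_i=w_i(S_i)$ and selects $l$ with $\delta_l$ minimal, whereas you define the global minimum $\mu$ directly and pick a triple $(i_0,j_0,k_0)$ realising it; since $\mu=\min_i\delta_i$, the two arguments are the same.
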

\begin{proof}
For any fixed index $i$, it follows from \Cref{ef} that
\begin{eqnarray*}
w_i\left(\sum_{j=1}^{f_i}\sum_{k=1}^{e_i}c_{i,j,k}a_{i,j}b_{i,k}\right)&=&\min\left\{w_i(c_{i,j,k}a_{i,j}b_{i,k})\mid1\leq j\leq f_i,\, 1\leq k\leq e_i\right\}\\
&=&\min\left\{v(c_{i,j,k})+w_i(b_{i,k})\mid1\leq j\leq f_i,\, 1\leq k\leq e_i\right\}.
\end{eqnarray*}
We will set $\delta_i$ equal to this value.
Note that, for $i'\neq i$,
\begin{eqnarray*}
w_{i'}\left(\sum_{j=1}^{f_i}\sum_{k=1}^{e_i}c_{i,j,k}a_{i,j}b_{i,k}\right)&\geq&\min\left\{w_{i'}(c_{i,j,k}a_{i,j}b_{i,k})\mid1\leq j\leq f_i,\, 1\leq k\leq e_i\right\}\\
&>&\min\left\{v(c_{i,j,k})+w_i(b_{i,k})\mid1\leq j\leq f_i,\, 1\leq k\leq e_i\right\}\\
&=&\delta_i.
\end{eqnarray*}
As such, for any $i$, we have that
$$w_i\left(\sum_{i=1}^n\sum_{j=1}^{f_i}\sum_{k=1}^{e_i}c_{i,j,k}a_{i,j}b_{i,k}\right)\geq\min\{\delta_j\mid1\leq j\leq n\}.$$

Now take an index $l$ such that $\delta_l$ is minimal.
For this $l$ and for $i\neq l$, we have that
$$w_l\left(\sum_{j=1}^{f_i}\sum_{k=1}^{e_i}c_{i,j,k}a_{i,j}b_{i,k}\right)>\delta_i\geq\delta_l.$$
This implies that
$$w_l\left(\sum_{i=1}^n\sum_{j=1}^{f_i}\sum_{k=1}^{e_i}c_{i,j,k}a_{i,j}b_{i,k}\right)=\delta_l.$$
This shows that $$\delta_l=\min\left\{w_m\left(\sum_{i=1}^n\sum_{j=1}^{f_i}\sum_{k=1}^{e_i}c_{i,j,k}a_{i,j}b_{i,k}\right)\mid1\leq m\leq n\right\}.$$
We now compute the value we assigned to $\delta_l$.
\begin{eqnarray*}
\delta_l&=&\min\{\delta_i\mid1\leq i\leq n\}\\
&=&\min\left\{\min\{v(c_{i,j,k})+w_i(b_{i,k})\mid1\leq j\leq f_i,\, 1\leq k\leq e_i\}\mid1\leq i\leq n\right\}\\
&=&\min\{v(c_{i,j,k})+w_i(b_{i,k})\mid1\leq i\leq n,\,1\leq j\leq f_i,\, 1\leq k\leq e_i\}
\end{eqnarray*}
This shows the equality in the theorem holds.

We now show the linear independence of the elements.
If some $c_{i,j,k}$ is nonzero, the rigt hand side will not evaluate to $\infty$.
This then implies $$w_l\left(\sum_{i=1}^n\sum_{j=1}^{f_i}\sum_{k=1}^{e_i}c_{i,j,k}a_{i,j}b_{i,k}\right)\neq\infty$$
for some $l$.
Therefore $$\sum_{i=1}^n\sum_{j=1}^{f_i}\sum_{k=1}^{e_i}c_{i,j,k}a_{i,j}b_{i,k}\neq0.$$
\end{proof}
\begin{cor}{\label{fun}}(Fundamental inequality)
Let $v$ be a valuation on a field $K$ and let $L$ be a finite extension of $K$.
Let $w_1,\ldots,w_n$ be distinct extensions of $v$ to $L$.
$$\sum_{i=1}^ne(w_i|v)f(w_i|v)\leq [L:K]$$
\end{cor}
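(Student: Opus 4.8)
The plan is to deduce the inequality from \Cref{fe}: that theorem asserts the $K$-linear independence of the elements $a_{i,j}b_{i,k}$, of which there are $\sum_{i=1}^{n}e(w_i|v)f(w_i|v)$, so it suffices to show that elements $a_{i,j}$ and $b_{i,k}$ with the properties required in its hypothesis exist, and to rule out the possibility that some $e_i$ or $f_i$ is infinite.

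First I would construct the $a_{i,j}$. Fix $i$ with $f_i$ finite and choose a $\kappa_v$-basis $\beta_1,\dots,\beta_{f_i}$ of $\kappa_{w_i}$. For each $j$, apply \Cref{weak} with the prescribed residues $\beta_j\in\kappa_{w_i}$ at $w_i$ and $0\in\kappa_{w_{i'}}$ at $w_{i'}$ for $i'\neq i$; this yields $a_{i,j}\in L$ integral over $\mathcal{O}_v$ with $a_{i,j}+\mathfrak{m}_{w_i}=\beta_j$ and $a_{i,j}\in\mathfrak{m}_{w_{i'}}$ for $i'\neq i$. Integrality forces $w_i(a_{i,j})\geq 0$, hence $w_i(a_{i,j})=0$ since $\beta_j\neq 0$, and similarly $w_{i'}(a_{i,j})>0$ for $i'\neq i$; the residues $\beta_j$ are $\kappa_v$-linearly independent by construction. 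Next I would construct the $b_{i,k}$: fix $i$ with $e_i$ finite and pick representatives $\gamma_1,\dots,\gamma_{e_i}\in\Gamma_{w_i}$ of the $e_i$ cosets of $\Gamma_v$ in $\Gamma_{w_i}$. After relabelling so that $w_i$ takes the role of $w_1$, \Cref{approx} gives, for each $k$, an element $b_{i,k}\in L$ with $w_i(b_{i,k})=\gamma_k$ and $w_{i'}(b_{i,k})>\gamma_k$ for $i'\neq i$; in particular $w_{i'}(b_{i,k})\geq w_i(b_{i,k})$, and the values $w_i(b_{i,k})$ represent all $e_i$ cosets.

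If every $e_i$ and every $f_i$ is finite, substituting these elements into \Cref{fe} shows the $\sum_{i=1}^{n}e_if_i$ elements $a_{i,j}b_{i,k}$ are $K$-linearly independent, so $\sum_{i=1}^{n}e(w_i|v)f(w_i|v)\leq[L:K]$. If some $f_i$ is infinite, then lifting arbitrarily many $\kappa_v$-linearly independent elements of $\kappa_{w_i}$ to $\mathcal{O}_{w_i}\subseteq L$ and taking a single $b$ equal to $1$, \Cref{ef} (applied with $w=w_i$) produces arbitrarily many $K$-linearly independent elements of $L$; if some $e_i$ is infinite, taking a single $a$ equal to $1$ and arbitrarily many coset representatives does the same. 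Either case contradicts $[L:K]<\infty$, so all $e_i,f_i$ are finite and the inequality follows in general.

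I expect the only real subtlety to be this simultaneous control of the $a_{i,j}$ and $b_{i,k}$ at all of $w_1,\dots,w_n$, which is precisely what \Cref{weak} and \Cref{approx} provide---indeed \Cref{approx} yields the stronger strict inequalities $w_{i'}(b_{i,k})>w_i(b_{i,k})$---so the counting step itself is immediate from \Cref{fe}.
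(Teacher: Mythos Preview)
Your proposal is correct and follows exactly the paper's approach: use \Cref{weak} and \Cref{approx} to manufacture the $a_{i,j}$ and $b_{i,k}$ required by \Cref{fe}, and then count. You have simply made explicit the details the paper leaves implicit, including the finiteness of the $e_i$ and $f_i$ (which the paper has already noted via the corollary to \Cref{ef}).
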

\begin{proof}
We use \Cref{weak} and \Cref{approx} to find elements satisfying the conditions of \Cref{fe}.
This gives us $\sum_{i=1}^ne(w_i|v)f(w_i|v)$ $K$-linearly independent elements.
The inequality now follows.
\end{proof}

\clearpage
\printbibliography

\end{document}